\newcommand{\differential}{{\rm{d}}}
\newcommand\puteqnum{%    handy shortcut macro
\refstepcounter{equation}\textup{(\theequation)}}
\crefname{hypothesis}{Hypothesis}{Hypotheses}
\title{Probabilistic Lambert Problem: Connections with Optimal Mass Transport, Schr\"{o}dinger Bridge and Reaction-Diffusion PDEs\thanks{Submitted to the editors DATE.
\funding{This work was supported in part by NSF grant 2112755 and the 2023-24 Baskin School of Engineering Dissertation Year Fellowship at the University of California Santa Cruz.}}}
\author{Alexis M.H. Teter\thanks{Department of Applied Mathematics, University of California Santa Cruz, CA 
  (\email{amteter@ucsc.edu}).}
\and Iman Nodozi\thanks{Department of Electrical and Computer Engineering, University of California Santa Cruz, CA 
  (\email{inodozi@ucsc.edu}).}
\and Abhishek Halder\thanks{Department of Aerospace Engineering, Iowa State University, IA 
  (\email{ahalder@iastate.edu}).}}
\newcommand{\complexi}{{\mathrm{i}}}
\begin{document}
	\nolinenumbers

\maketitle

% REQUIRED
\begin{abstract}
The Lambert problem originated in orbital mechanics. It concerns with determining the initial velocity for a boundary value problem involving the dynamical constraint due to gravitational potential with additional time horizon and endpoint position constraints. Its solution has application in transferring a spacecraft from a given initial to a given terminal position within prescribed flight time via velocity control. We consider a probabilistic variant of the Lambert problem where the knowledge of the endpoint constraints in position vectors are replaced by the knowledge of their respective joint probability density functions. We show that the Lambert problem with endpoint joint probability density constraints is a generalized optimal mass transport (OMT) problem, thereby connecting this classical astrodynamics problem with a burgeoning area of research in modern stochastic control and stochastic machine learning. This newfound connection allows us to rigorously establish the existence and uniqueness of solution for the probabilistic Lambert problem. The same connection also helps to numerically solve the probabilistic Lambert problem via diffusion regularization, i.e., by leveraging further connection of the OMT with the Schrödinger bridge problem (SBP). This also shows that the probabilistic Lambert problem with additive dynamic process noise is a generalized SBP, and can be solved numerically using the so-called Schrödinger factors, as we do in this work. Our analysis leads to solving a system of reaction-diffusion PDEs where the gravitational potential appears as the reaction rate.
\end{abstract}

% REQUIRED
\begin{keywords}
Lambert problem, optimal mass transport, Schr\"{o}dinger bridge
\end{keywords}

% REQUIRED
\begin{MSCcodes}
93E20, 60J70, 60H10
\end{MSCcodes}

\section{Introduction}\label{sec:Intro}
The classical Lambert problem in orbital mechanics is a \emph{partially specified} finite dimensional two point boundary value problem over a prescribed time horizon $[t_0,t_1]$ where $t_0,t_1$ are fixed, subject to a two-body gravitational potential force field, and endpoint constraints on the relative position vector. 

Specifically, at any time $t\in[t_0,t_1]$, let $\bm{r}=(x,y,z)^{\top}\in\mathbb{R}^{3}$ be the position and $\bm{v}:=\dot{\bm{r}}\in\mathbb{R}^{3}$ be the velocity of a spacecraft with respect to the Earth centered inertial frame. Denote the Euclidean magnitude of the position vector $\bm{r}$ as $\vert \bm{r} \vert := \sqrt{x^2 + y^2 + z^2}$.

The classical Lambert problem seeks to compute a velocity field $\bm{v}=\bm{v}(\bm{r},t)$ subject to the two-body gravitational equation of motion, endpoint position and hard flight time constraints: 
\begin{align}
\ddot{\bm{r}} = -\nabla_{\bm{r}}V\left(\bm{r}\right), \quad \bm{r}(t=t_0) = \bm{r}_0\:\text{(given)}, \quad \bm{r}(t=t_1) = \bm{r}_{1}\:\text{(given)},    \label{TPBVP}    
\end{align}
where the nonlinear potential $V(\cdot)$ is bounded, and has typical form:  
\begin{align}
%V(\bm{r}) := -\dfrac{\mu}{r} - \dfrac{J_{2}}{2r^{3}} \left(1-\dfrac{3z^2}{r^2}\right).
V(\bm{r}) := - \frac{\mu}{|\bm{r}|} - \frac{\mu J_2 R_{\textrm{Earth}}^2}{2 |\bm{r}|^3} \left( 1 - \frac{3z^2}{|\bm{r}|^2} \right), \quad \bm{r}\in [R_{\rm{Earth}}, +\infty).
\label{defPotential}    
\end{align}
The partial specification refers to that only the endpoint positions are specified in \cref{TPBVP}, i.e., the initial velocity remains to be determined.

In \cref{defPotential}, the parameter $\mu=398600.4415$ km$^{3}$/s$^{2}$ denotes the
product of the Earth's gravitational constant and mass. The parameter $J_2 = 1.08263 \times 10^{-3}$ denotes the second zonal harmonic coefficient (unitless), which is a measure of the oblateness of the Earth. The radius of Earth $R_{\textrm{Earth}} = 6378.1363$ km. Although we use the potential \cref{defPotential} in numerical simulation (\cref{sec:Numerical}) for specificity, it will be apparent that all our developments go through for more general geopotential models \cite{egm2008}, i.e., when higher order harmonics (zonal and tesseral terms) are included in $V(\bm{r})$. Specifically, all our results are valid as long as $V(\bm{r})$ is bounded and continuously differentiable, which is indeed the case for all $\bm{r}\in [R_{\rm{Earth}}, +\infty)$.

In this work, we consider a probabilistic Lambert problem that softens the endpoint constraints in \cref{TPBVP} to
\begin{align}
\bm{r}(t=t_0) \sim \rho_0\:\text{(given)}, \quad \bm{r}(t=t_1) \sim \rho_1\:\text{(given)},   
\label{EndpointPDFs}    
\end{align}
where $\sim$ is a shorthand for ``follows the statistical law''. Thus, we allow stochastic uncertainties in the endpoint relative positions, and instead of steering between two given position vectors, we now consider steering between their statistics given by the respective joint probability density functions (PDFs) $\rho_0,\rho_1$. So the probabilistic Lambert problem becomes
\begin{subequations}
\begin{align}
&\underset{\dot{\bm{r}}=\bm{v}(\bm{r},t)}{\text{find}}\quad \bm{v}\\
&\ddot{\bm{r}} = -\nabla_{\bm{r}}V\left(\bm{r}\right),\label{ProbLambertDetDyn}\\
& \bm{r}(t=t_0) \sim \rho_0\:\text{(given)}, \quad \bm{r}(t=t_1) \sim \rho_1\:\text{(given)}.\label{ProbLambertEndpoint}
\end{align}
\label{ProbLambertFeasibility}
\end{subequations}
The PDF $\rho_0$ models initial condition uncertainty (e.g., due to statistical estimation errors). The PDF $\rho_1$ on the other hand, encodes desired statistical performance specification, i.e., the allowable terminal condition uncertainty. A small terminal condition uncertainty would imply that $\rho_1$ is ``tall and skinny'' approximating Dirac delta with its Lebesgue mass supported on a small subset of $\mathbb{R}^3$. A less stringent specification of terminal uncertainty would allow $\rho_1$ to have its mass spread over a subset of $\mathbb{R}^3$ with a larger Lebesgue volume. If problem \cref{ProbLambertFeasibility} is feasible, then intuition suggests that a more (resp. less) stringent $\rho_1$ specification would result in a more (resp. less) ``expensive'' control $\bm{v}$ but it is unclear in what sense. As such, \cref{ProbLambertFeasibility} is posed as a \emph{feasibility} problem and even if it admits a unique solution $\bm{v}$ (also unclear why), it is not obvious what \emph{optimality} guarantee, if any, does that solution usher.  

Mathematically, \cref{ProbLambertFeasibility} is an \emph{infinite dimensional} two point boundary value problem since the endpoints $\rho_0,\rho_1$ are elements in the manifold of joint PDFs supported on $\mathbb{R}^{3}$. Solving \cref{ProbLambertFeasibility} amounts to computing a PDF-valued curve parameterized by $t\in[t_0,t_1]$ on this infinite dimensional manifold connecting the endpoints $\rho_0$ and $\rho_1$. Since this manifold is not a vector space, a rigorous solution of this problem requires understanding and systematically leveraging the geometry (e.g., metric structure) in this manifold. The main technical contribution of this work is to clarify how this can be done through a generalization of the theory of \emph{optimal mass transport (OMT)}. We will provide the necessary background for dynamic OMT in \cref{subsec:OMT}. Standard references on OMT are \cite{villani2003topics,villani2009optimal}.

Connecting problem \cref{ProbLambertFeasibility} with the OMT allows us to make progress on multiple fronts. \emph{First}, it allows us to rigorously establish that the solution for \cref{ProbLambertFeasibility} is indeed unique. \emph{Second}, our mathematical development for proving uniqueness reveals that the unique velocity field $\bm{v}$ is indeed optimal in certain minimum effort sense. \emph{Third}, it allows generalizing \cref{ProbLambertFeasibility} for the case when the velocity has additive process noise, i.e., when the controlled ODE \begin{align}
\dot{\bm{r}}=\bm{v}(\bm{r},t) 
\label{ODESamplePath}
\end{align}
is replaced with the It\^{o} stochastic differential equation (SDE) \begin{align}
\differential\bm{r} = \bm{v}(\bm{r},t)\:\differential t + \sqrt{2\varepsilon}\:\differential\bm{w}(t),
\label{SDESamplePath}
\end{align} 
where $\bm{w}(t)\in\mathbb{R}^3$ is the standard Wiener process, and the strength of the process noise $\varepsilon>0$ is \emph{not necessarily small}. This is of practical interest: the process noise in \cref{SDESamplePath} may result from noisy actuation in velocity command, or from stochastic disturbance in atmospheric drag, solar radiation pressure, etc. Our results show that just like problem \cref{ProbLambertFeasibility} is a generalized OMT, the corresponding problem with \cref{ODESamplePath} replaced by \cref{SDESamplePath} is a generalized \emph{Schr\"{o}dinger bridge problem (SBP)} \cite{schrodinger1931umkehrung,schrodinger1932theorie,wakolbinger1990schrodinger}. We will summarize the necessary background on SBP in \cref{subsec:SBP}.

\subsection*{History and related works}
The original formulation of the Lambert problem was deterministic. It bears the name of Johann Heinrich Lambert, who is also credited for the first proof for the irrationality of $\pi$ \cite[p. 141]{berggren1997pi} in 1761. In a letter sent to Euler in the February of the same year \cite{euler1924briefwechsel}, Lambert mentions of his result on the problem of determining the orbit of a comet relating a given flight time $t_1-t_0$ to the problem data $\bm{r}_0,\bm{r}_1$, assuming the connecting arc is a parabola. Such problems were studied earlier by Euler \cite{euler1743determinatio}. Lambert's results became well known through his book \cite{lambert1761ih} with follow-up works due to many others including Lagrange, Laplace, Gauss, and Legendre; for a detailed chronology, see \cite[Sec. 9]{albouy2019lambert}. These early investigations considered Keplerian potential, i.e., \cref{defPotential} without the $J_2$ term, in which case the feasible arcs connecting $\bm{r}_0,\bm{r}_1$ are conic sections which inspired Eulcidean geometric treatments in these studies.

The deterministic Lambert problem has a substantial modern literature in the guidance-navigation-control community -- a non-exhaustive list of references includes \cite{battin1977lambert,battin1984elegant,engels1981gravity,shen2003optimal,mcmahon2016linearized}, \cite[Ch. 13.4]{junkins2009analytical}. The sustained interest in this topic stems from its relevance in spacecraft trajectory design and rendezvous problems. A deterministic optimal control reformulation of the Lambert problem that inspires our development (see \cref{subsec:LambertOTformulation}) is due to \cite{bando2010new}.

The interest in Lambert problem with probabilistic uncertainty is relatively recent. It has been investigated before using Taylor expansion \cite{armellin2012high}, using linearization followed by second order statistics (i.e., covariance) mapping \cite{schumacher2015uncertain,zhang2018covariance}, using the pushforward mapping of probability measures  \cite{adurthi2020uncertain}, using polynomial approximation \cite{hall2018higher}, and using neural network and Gaussian process regression \cite{gueho2020comparison}. These works consider the endpoint probabilistic uncertainties as in \cref{ProbLambertEndpoint} or some approximants thereof. However, they do not explicitly account for the stochastic dynamics \cref{SDESamplePath}. In contrast, recognizing \cref{ProbLambertFeasibility} as a generalized OMT naturally leads to a further generalized SBP that precisely corresponds to the controlled stochastic dynamics \cref{SDESamplePath}. 

The reaction rate appearing in the system of coupled reaction-diffusion PDEs that we derive has the physical meaning of gravitational potential. It also has a probabilistic interpretation: the rate of killing and creation of probability mass. The recent work \cite{chen2022most} also considers an SBP with killing or creation but with \emph{unbalanced} endpoint marginals, which is not the case for us. While it is possible to convert that unbalanced problem to an equivalent balanced problem, doing so changes the mathematical structure and meaning of the stochastic optimal control problem, see \cite[eq. (4.3) and Sec. 5]{chen2022most}.

\subsection*{Contributions}
This work makes the following novel contributions.
\begin{itemize}
    \item We establish that the Lambert problem with endpoint joint PDF constraints, i.e., problem \cref{ProbLambertFeasibility}, is a generalized variant of the OMT problem. The importance of the OMT in dynamics-control problems with probabilistic uncertainties is being recognized across different research communities at a very rapid pace \cite{chen2021optimal,peyre2019computational,santambrogio2015optimal,tong2020trajectorynet,buttazzo2012optimal,ambrosio2005gradient,mei2018mean,halder2014probabilistic,lee2015performance,caluya2019gradient}. In this article, we uncover a hitherto unknown link between the OMT and the probabilistic Lambert problem.     
    \item Making a precise connection with the OMT allows us to theoretically guarantee existence-uniqueness for the solution of the probabilistic Lambert problem. We accomplish this using two mathematical ingredients: a deterministic optimal control reformulation of the Lambert problem followed by a generalization of the classical dynamic OMT. The former utilizes classical analytical mechanics while the latter relies on a relatively recent development, viz. Figalli's theory \cite{figalli2007optimal} of OMT with cost derived from an action functional.

    \item The connection with OMT also allows for generalization in the sense that the probabilistic Lambert problem \emph{with process noise} leads to a generalized SBP, which is a stochastic regularization of the OMT. We find that this generalized SBP has an additive state cost which equals to the negative of the gravitational potential. This is a scantily investigated \cite{dawson1990schrodinger,aebi1992large,7170905} variant of the generalized SBP, and this paper appears to be the first work on the same in an engineering problem. Yet, this is precisely the formulation that the probabilistic Lambert problem with process noise leads to. We note that the focus of \cite{dawson1990schrodinger,aebi1992large} were to deduce a probabilistic interpretation of such generalized SBPs in the language of the large deviation principle \cite{dembo2009large}. Our interest here is to actually solve such problems using computational algorithm, which is again a new endeavor.
    
  \item We show that the additive state cost in the generalized SBP formulation corresponding to the probabilistic Lambert problem with process noise, can be reduced to solving a system of boundary-coupled reaction-diffusion PDEs with state-dependent nonlinear reaction rates. Interestingly, the gravitational potential, with appropriate signs, play the role of these reaction rates. We propose a novel algorithm to numerically solve this system of PDEs and boundary conditions.
    
    \item We demonstrate that these newfound connections with the OMT and the SBP, allow us to numerically solve the probabilistic Lambert problem using \emph{nonparametric} computation. In particular, our approach avoids parameterizing the nonlinear dynamics (e.g., using Taylor series) or the statistics (e.g., assuming a finite number of moments such as mean and covariance, parametric families such as Gaussian mixture, exponential family). In this sense, our results are as assumption free as one might hope for.
\end{itemize}

\subsection*{Notations} We use $\vert\cdot\vert$ to denote the Euclidean magnitude and $\langle\cdot,\cdot\rangle$ to denote the Eulcidean inner product. The symbols $\nabla_{\bm{r}},\nabla_{\bm{r}}\cdot,\Delta_{\bm{r}}$ denote the Euclidean gradient, divergence and Laplacian operators with respect to the vector $\bm{r}$, respectively. We use $C_{c}^{\infty}\left(\mathcal{S}\right)$ to denote the space of infinitely differentiable functions that are supported on compact subsets of the set $\mathcal{S}$. The notation $C^{m,n}\left(\mathcal{T};\mathcal{S}\right)$ is used for the space of functions which are $m$ and $n$ times continuously differentiable on $\mathcal{T}$ and $\mathcal{S}$, respectively. Abbreviations a.s., a.e., w.r.t. respectively stand for almost surely, almost everywhere, with respect to.

\subsection*{Organization}
In \cref{sec:Prelim}, we provide a brief background on the OMT and the SBP, needed for the technical development that ensues. The subsequent sections are written in a way that a reader, if wishes so, may skip \cref{sec:Prelim} during the first pass and come back to it as needed. In \cref{sec:LambertOT}, we show that accounting for \cref{EndpointPDFs} in \cref{TPBVP} leads to a generalized version of the OMT problem, which we refer to as the \emph{Lambertian optimal mass transport (L-OMT)}. In \cref{sec:LambertSBP}, we consider a regularized version of the L-OMT, which we refer to as the \emph{Lambertian Schr\"{o}dinger bridge problem (L-SBP)}. In \cref{sec:Algorithm}, we propose an algorithm to numerically solve the L-SBP. The same algorithm can be used to solve the L-OMT in the small diffusive regularization limit. We provide illustrative numerical results in \cref{sec:Numerical}.    

%%%%%%%%%%%%%%%%%%%%%%%%%%%%%%%%%%%%%%%%%%%%%%%%%%%%%%%%%%%%%%%%%%%%%%%%%%%%%%%%%%%%%%%%%%

\section{Background on OMT and SBP}\label{sec:Prelim}
In this Section, we summarize the classical OMT and SBP background relevant to the technical development that follows. For the Lambert problem of interest, we will need certain generalizations of the classical results summarized here. These generalizations and derivations will be done in situ.

\subsection{Optimal Mass Transport (OMT)}\label{subsec:OMT}
Let $\mathcal{P}_2\left(\mathbb{R}^{d}\right)$ denote the set of PDFs supported over $\mathbb{R}^{d}$, whose elements have finite second moments, i.e., 
$$
\mathcal{P}_2\left(\mathbb{R}^d\right):=\left\{\rho: \mathbb{R}^d \mapsto \mathbb{R}_{\geq 0} \mid \int_{\mathbb{R}^d} \rho \mathrm{d} \boldsymbol{r}=1, \int_{\mathbb{R}^d}\vert\boldsymbol{r}\vert^2 \rho \mathrm{d} \boldsymbol{r}<\infty\right\}.
$$
For $\rho_0,\rho_1\in\mathcal{P}_2\left(\mathbb{R}^{d}\right)$, let the collection of all PDF-valued trajectories $\rho(\cdot,t)$ in $\mathcal{P}_{2}\left(\mathbb{R}^{d}\right)$ that are continuous in $t\in[t_0,t_1]$ with endpoints $\rho(\cdot,t=t_0)=\rho_0, \rho(\cdot,t=t_1)=\rho_1$, i.e., 
\begin{align}
\mathcal{P}_{01}:=\{\rho(\cdot,t)\in\mathcal{P}_{2}\left(\mathbb{R}^{d}\right) \mid \rho(\cdot,t=t_0)=\rho_0,\rho(\cdot,t=t_1)=\rho_1\}.
\label{DefP01}    
\end{align}
 Let $\mathcal{V}$ be the set comprising all Markovian control policies with finite energy, i.e.,
\begin{align}
\mathcal{V}:=\bigg\{\bm{v}:\mathbb{R}^{d}\times \left[t_0,t_1\right]\mapsto\mathbb{R}^{d}\mid\int_{t_0}^{t_1}\int_{\mathbb{R}^d}\vert\bm{v}\vert^2 \rho(\bm{r},t)  \differential \bm{r}\differential t < \infty \;\text{for all}\;\rho\in\mathcal{P}_{2}\left(\mathbb{R}^{d}\right)\bigg\}.
\label{DefFerasibleControl}
\end{align}

For a given time horizon $[t_0,t_1]$, the dynamic formulation of OMT, introduced by Benamou and Brenier \cite{benamou2000computational}, is the variational problem:
\begin{subequations}
\label{BenamouBrenierOMT}  
\begin{align}
&\underset{(\rho,\bm{v})\in\mathcal{P}_{01}\times\mathcal{V}}{\arg\inf}\int_{t_{0}}^{t_{1}}\int_{\mathbb{R}^{d}}\frac{1}{2}\vert\bm{v}\vert^2\:\rho(\bm{r},t)\differential\bm{r}\differential t\label{BBobj}\\
&\qquad\dfrac{\partial\rho}{\partial t} + \nabla_{\bm{r}}\cdot\left(\rho\bm{v}\right) = 0.\label{BBdyn}%\\
%&\qquad\rho(\bm{r},t=t_0) = \rho_0, \quad \rho(\bm{r},t=t_1) = \rho_1.\label{BBendpoints}
\end{align}
\end{subequations}
For details, we refer the readers to \cite[Thm. 8.1]{villani2003topics}, \cite[Ch. 8]{ambrosio2005gradient}.

The objective \cref{BBobj} seeks to minimize the average control effort $\frac{1}{2}\vert\bm{v}\vert^2$ w.r.t. the joint PDF $\rho(\bm{r},t)$. The evolution of the state PDF $\rho(\bm{r},t)$ is governed by the Liouville PDE \cref{BBdyn}, which describes the transport of mass under a feasible control policy $\bm{v}\in\mathcal{V}$. The sample path dynamics associated with the Liouville PDE \cref{BBdyn} is the controlled ODE \cref{ODESamplePath}; see e.g., \cite{halder2011dispersion}. The existence-uniqueness of solution for \cref{BenamouBrenierOMT} is guaranteed provided the endpoints $\rho_0,\rho_1\in\mathcal{P}_2\left(\mathbb{R}^{d}\right)$.

%%%%%%%%%%%%%%%%%%%%%%%%%%%%%%

\subsection{Schr\"{o}dinger Bridge Problem (SBP)}\label{subsec:SBP}
The SBP originated from the works of Erwin Schr\"{o}dinger \cite{schrodinger1931umkehrung,schrodinger1932theorie,wakolbinger1990schrodinger}. Notably, it predates both the mathematical theory of stochastic processes and the theory of feedback control. Schr\"{o}dinger's original motivation for studying this problem was to find a probabilistic interpretation of quantum mechanics.
In recent years, SBP with nonlinear prior dynamics \cite{chen2015fast,elamvazhuthi2018optimal,haddad2020prediction,caluya2021wasserstein,caluya2021reflected,nodozi2022schr} have appeared in the literature. 

The classical formulation of SBP computes the minimum effort additive control required to steer a given PDF to another given PDF over a specified finite time horizon, subject to a controlled Brownian motion constraint. This leads to a stochastic optimal control problem:
\begin{subequations}
\begin{align}
&\underset{(\rho,\bm{v})\in\mathcal{P}_{01}\times\mathcal{V}}{\arg\inf}\int_{t_0}^{t_1}\int_{\mathbb{R}^{d}}\frac{1}{2}\vert\bm{v}\vert^2\:\rho(\bm{r},t)\:\differential\bm{r}\:\differential t \label{SBPobj}\\ &\qquad\dfrac{\partial\rho}{\partial t} + \nabla_{\bm{r}}\left(\rho\bm{v}\right) = \varepsilon\Delta_{\bm{r}}\rho,\quad\varepsilon > 0.\label{SBPdyn}%\\
% &\qquad\rho(\bm{r},t=t_0) = \rho_0, \quad \rho(\bm{r},t=t_1) = \rho_1.\label{SBPendpoints}
\end{align}
\label{SBP}  
\end{subequations}
As is the case for \cref{BenamouBrenierOMT}, the existence-uniqueness of the solution for \cref{SBP} is guaranteed for $\rho_0,\rho_1\in\mathcal{P}_2\left(\mathbb{R}^{d}\right)$. Moreover, the solution of \cref{SBP} enjoys time-reversibility in the sense that swapping the endpoint data $\rho_0,\rho_1$ results in a solution that is precisely the forward solution run reverse in time over the given time horizon.

When $\varepsilon\downarrow 0$, the SBP \cref{SBP} reduces to the OMT problem \cref{BenamouBrenierOMT} and its solution converges \cite{mikami2004monge,leonard2012schrodinger,leonard2014survey} to the solution of \cref{BenamouBrenierOMT}. Notice that \cref{BBdyn} is the \emph{first order} Liouville PDE while \cref{SBPdyn} is the \emph{second order} Fokker-Planck-Kolmogorov (FPK) PDE. The macroscopic dynamics \cref{SBPdyn} corresponds to the (microscopic) controlled sample path dynamics \cref{SDESamplePath}.

%%%%%%%%%%%%%%%%%%%%%%%%%%%%%%%%%%%%%%%%%%%%%

\section{Lambertian Optimal Mass Transport}\label{sec:LambertOT}
The starting point of our development is reformulating \eqref{TPBVP} as a standard deterministic optimal control problem.

\subsection{Formulation}\label{subsec:LambertOTformulation}
We start with a known result \cite{bando2010new,kim2020optimal} that exactly transforms problem \eqref{TPBVP} as a standard deterministic optimal control problem with state variable $\bm{r}$ and control variable $\bm{v}$, given by
\begin{subequations}
\begin{align}
&\underset{\bm{v}}{\arg\inf}\quad\displaystyle\int_{t_0}^{t_1}\left(\dfrac{1}{2}\vert \bm{v}\vert ^{2} - V(\bm{r})\right)\differential t \label{LambertOCPobj}\\  
&\dot{\bm{r}} = \bm{v}, \label{LambertOCPdynconstr}\\
& \bm{r}(t=t_0) = \bm{r}_0\:\text{(given)}, \quad \bm{r}(t=t_1) = \bm{r}_{1}\:\text{(given)}\label{LambertOCPterminalconstr}.
\end{align}
\label{LambertOCP}
\end{subequations}
Letting $(\bm{q},\dot{\bm{q}})\equiv (\bm{r},\bm{v})$, this reformulation can be interpreted as the celebrated Hamilton's principle of least action where the objective in \eqref{LambertOCPobj} is an action integral $\int_{t_0}^{t_1} L(\bm{q},\dot{\bm{q}})\:\differential t$ with Lagrangian $L(\bm{q},\dot{\bm{q}}) := T(\dot{\bm{q}}) - V(\bm{q})$, the kinetic energy $T(\dot{\bm{q}}) := \frac{1}{2}\langle\dot{\bm{q}},\dot{\bm{q}}\rangle$, and the potential energy $V(\bm{q})$ given by \eqref{defPotential}. The associated Hamiltonian $H(\bm{q},\dot{\bm{q}})$ defined as the Legendre-Fenchel conjugate \cite[p. 104]{rockafellar1970convex} of the Lagrangian $L$, equals $H(\bm{q},\dot{\bm{q}}) = \langle\frac{\partial L}{\partial\dot{\bm{q}}},\dot{\bm{q}}\rangle - L(\bm{q},\dot{\bm{q}}) = T(\dot{\bm{q}})+V(\bm{q})$, the total energy. Consequently, the equations of motion in Hamilton's canonical variables $(\bm{q},\bm{p}):=(\bm{q},\frac{\partial L}{\partial\dot{\bm{q}}})=(\bm{q},\dot{\bm{q}})$ becomes
$$\dot{\bm{q}}=\dfrac{\partial H}{\partial\bm{p}}, \quad \dot{\bm{p}}=-\dfrac{\partial H}{\partial\bm{q}},$$
which is identical to the dynamics  $\ddot{\bm{r}}=-\nabla_{\bm{q}}V(\bm{q})$ in \eqref{TPBVP}, as expected.

Replacing the endpoint constraints in \eqref{TPBVP} with \eqref{EndpointPDFs} is, therefore, equivalent to modifying the optimal control problem \eqref{LambertOCP} by replacing \eqref{LambertOCPterminalconstr} with \eqref{EndpointPDFs}. Furthermore, the probabilistic uncertainty in the initial condition $\bm{r}(t=t_0)\sim\rho_0$ is advected by the controlled sample path dynamics \eqref{LambertOCPdynconstr} resulting in the joint PDF evolution of the state  $\bm{r}(t)$ governed via the Liouville PDE initial value problem (see e.g., \cite{halder2010beyond,halder2011dispersion,haddad2022density}) 
\begin{align}
\dfrac{\partial\rho}{\partial t} + \nabla_{\bm{r}}\cdot \left(\rho\bm{v}\right)=0, \quad \rho(t=0,\cdot)=\rho_0\:\text{(given)},    
\label{LiouvilleIVP}    
\end{align}
where $\rho(\bm{r},t)$ denotes the \emph{controlled} transient joint state PDF for a given control policy $\bm{v}(\bm{r},t)$. The Liouville PDE $\dfrac{\partial\rho}{\partial t} + \nabla_{\bm{r}}\cdot \left(\rho\bm{v}\right)=0$ is the continuity equation signifying the conservation of probability mass over the state space, i.e., $\int_{\mathbb{R}^3} \rho(\bm{r},t)\:\differential\bm{r}=1$ for all $t\in[t_0,t_1]$. The solution of the Liouville PDE is to be understood in weak sense\footnote{This means that for all compactly supported smooth test functions $\phi(\bm{r},t)\in C_{c}^{
\infty}\left(\mathbb{R}^3\times [t_0,t_1]\right)$, the function $\rho(\bm{r},t)$ satisfies $\int_{t_0}^{t_1}\int_{\mathbb{R}^{3}}\left(\rho\frac{\partial\phi}{\partial t} + \rho\langle\bm{v},\nabla_{\bm{r}}\phi\rangle\right)\differential\bm{r}\differential t + \int_{\mathbb{R}^{3}}\rho_0(\bm{r})\phi(\bm{r},t=t_0)\differential\bm{r} = 0$.}.  

Different feasible control policies $\bm{v}$ in \eqref{LiouvilleIVP} induce different PDF-valued trajectories $\rho(\cdot,t)$ connecting the prescribed endpoint joints $\rho_0,\rho_1\in\mathcal{P}_2\left(\mathbb{R}^{3}\right)$. Hence, the objective in \eqref{LambertOCPobj} should be averaged w.r.t. the controlled transient joint probability measure $\rho(\bm{r},t)\:\differential\bm{r}$, and the resulting functional should be minimized over the pair $(\rho,\bm{v})$. We thus arrive at a \emph{generalized OMT} formulation
\begin{subequations}
\begin{align}
&\underset{\left(\rho,\bm{v}\right)\in\mathcal{P}_{01}\times\mathcal{V}}{\arg\inf}\quad\displaystyle\int_{t_0}^{t_1}\int_{\mathbb{R}^3}\left(\dfrac{1}{2}\vert \bm{v}\vert ^{2} - V(\bm{r})\right)\rho(\bm{r},t)\:\differential\bm{r}\:\differential t \label{LambertOTobj}\\  
&\dfrac{\partial\rho}{\partial t} + \nabla_{\bm{r}}\cdot \left(\rho\bm{v}\right)=0.\label{LambertOTdynconstr}%\\
% & \bm{r}(t=t_0) \sim \rho_0\:\text{(given)}, \quad \bm{r}(t=t_1) \sim \rho_{1}\:\text{(given)}\label{LambertOTterminalconstr},
\end{align}
\label{LambertOT}
\end{subequations}
henceforth referred to as the \emph{Lambertian optimal mass transport (L-OMT)}.

Notice that in the special case when the endpoint joint PDFs are Dirac deltas at some given positions $\bm{r}_0,\bm{r}_1\in\mathbb{R}^3$, i.e., $\rho_0(\bm{r}) = \delta(\bm{r}-\bm{r}_0), \rho_1(\bm{r}) = \delta(\bm{r}-\bm{r}_1)$, then problem \eqref{LambertOT} reduces back to \eqref{LambertOCP}, or equivalently to \eqref{TPBVP}. 

What exactly makes problem \eqref{LambertOT} ``generalized''? The answer is the potential $V$, because if $V\equiv 0$ then \eqref{LambertOT} would be identical to the classical dynamic OMT \eqref{BenamouBrenierOMT}. From a control-theoretic perspective, the problem \eqref{LambertOT} is an atypical stochastic optimal control problem because it asks to minimize certain total expected ``cost-to-go'' from a given PDF to another under deadline and controlled dynamics constraints. We next establish
the existence-uniqueness of solution for \eqref{LambertOT}.

\begin{remark}
We emphasize here that the problem \eqref{ProbLambertFeasibility} with second order dynamics constraint is exactly equivalent to the L-OMT \eqref{LambertOT} with first order dynamics constraint: the nonlinearity in $V$ in the second order dynamics in the former has now been ``pushed" to the Lagrangian of the latter. This makes it possible to study a feasibility problem \eqref{ProbLambertFeasibility} in the manifold of PDF-valued curves as an equivalent optimization problem \eqref{LambertOT} in that manifold.
\end{remark}

%%%%%%%%%%%%%%%%%%%%%%%%%%%%%%%%%%%%%%

\subsection{Existence and uniqueness of Solution}\label{subsec:LOMTexistenceunqueness} We start with a definition.
\begin{definition} [Superlinear function]
A function $f\colon\mathbb{R}^{d}\mapsto \mathbb{R}$ is superlinear or 1-coercive, if
$$\displaystyle\lim_{\vert\bm{x}\vert \rightarrow\infty}\dfrac{f(\bm{x})}{\vert\bm{x}\vert} = +\infty.$$
\label{defSuperlinear}
\end{definition}
We now state and prove the following for problem \eqref{LambertOT}. 

\begin{theorem}[Existence-uniqueness of solution for L-OMT]\label{thm:existenceuniqueness} 
Let $\rho_0,\rho_1\in\mathcal{P}_{2}(\mathbb{R}^{3})$, 
and the gravitational potential $V$ is negative and lower bounded as in \eqref{defPotential}. Then the minimizing tuple $\left(\rho^{\rm{opt}},\bm{v}^{\rm{opt}}\right)$ for problem \eqref{LambertOT} exists and is unique.
\end{theorem}

\begin{proof} The ``cost-to-go'' in \eqref{LambertOTobj} is the average of the Lagrangian
\begin{align}
    L(t, \bm{r}, \bm{\dot{r}}) = \frac{1}{2} \vert \bm{\dot{r}}\vert^2 - V(\bm{r}).
    \label{LOMTlagrangian}
\end{align}
Since $\frac{1}{2} \vert \cdot\vert^2$ is a strictly convex function, so $L$ is strictly convex in $\bm{\dot{r}}$. We now show that $L$ is also superlinear in $\bm{\dot{r}}$. Notice that
\begin{align}
    \lim_{\vert \bm{\dot{r}}\vert  \rightarrow \infty} \frac{L}{\vert \bm{\dot{r}}\vert } = \lim_{\vert \bm{\dot{r}}\vert \rightarrow \infty} \left( \frac{\frac{1}{2} \vert \bm{\dot{r}}\vert^2 - V(\bm{r})}{\vert \bm{\dot{r}}\vert } \right) = \left( \lim_{\vert\bm{\dot{r}}\vert \rightarrow \infty} \frac{1}{2}\vert \bm{\dot{r}}\vert \right) +  \left( \lim_{\vert\bm{\dot{r}}\vert \rightarrow \infty} \frac{-V(\bm{r})}{\vert\bm{\dot{r}}\vert} \right).
    \label{ShowingCoercive}
\end{align}
The first term $\lim_{\vert\bm{\dot{r}}\vert \rightarrow \infty} \frac{1}{2}\vert\bm{\dot{r}}\vert = +\infty$. For the second term in \eqref{ShowingCoercive}, since $V$ being negative and lower bounded, $-V$ is positive and upper bounded. Thus $\displaystyle\lim_{\vert\bm{\dot{r}}\vert \rightarrow \infty} \frac{-V(\bm{r})}{\vert\bm{\dot{r}}\vert} = 0$. Therefore, 
$$\lim_{\vert \bm{\dot{r}}\vert \rightarrow \infty} \frac{L}{\vert\bm{\dot{r}}\vert } = +\infty.$$
Being strictly convex and superlinear in $\bm{\dot{r}}$, the $L$ in \cref{LOMTlagrangian} is a weak Tonelli Lagrangian \cite[p. 118]{villani2009optimal}, \cite[Ch. 6.2]{figalli2007optimal}, which in turn guarantees \cite[Thm. 1.4.2]{figalli2007optimal} that the pair $\left(\rho^{\rm{opt}},\bm{v}^{\rm{opt}}\right)$ for the generalized OMT problem \cref{LambertOT} exists and is unique.
\end{proof}

\begin{remark}\label{RemarkVnegative}
The proof above only used that the potential $V$ is negative and lower bounded; it did not use the specific form \cref{defPotential}. Thus, the existence-uniqueness result above applies for any negative and lower bounded continuously differentiable gravitational potentials. To explicitly see why the $V$ in \eqref{defPotential} is negative, rewrite it as \begin{align}
V(\bm{r}) = -\dfrac{\mu}{|\bm{r}|} \left( 1 + \dfrac{J_2 R_{\rm{Earth}}^2}{2 |\bm{r}|^{2}} \left(1-\dfrac{3z^2}{|\bm{r}|^2} \right) \right). 
\label{vr}
\end{align}
Since $0 \leq z^2 \leq \vert\bm{r}\vert^2$, we have $-2 \leq 1 - \frac{3z^2}{\vert\bm{r}\vert^2} \leq 1$. On the other hand, $\vert\bm{r}\vert \geq R_{\rm{Earth}}$, which yields 
$$\dfrac{J_2 R_{\rm{Earth}}^2}{2 |\bm{r}|^{2}} \leq \dfrac{J_2 R_{\rm{Earth}}^2}{2 R_{\rm{Earth}}^{2}} \leq \dfrac{J_2}{2} < 0.0006.$$
Hence,
\begin{align}
    (-2)(0.0006) = -0.0012 < \dfrac{J_2 R_{\rm{Earth}}^2}{2 \vert\bm{r}\vert^{2}} \left(1-\dfrac{3z^2}{\vert \bm{r}\vert^2} \right). 
    \label{VrInequality}
\end{align}
Combining \eqref{vr} and 
\eqref{VrInequality}, we note that $V(\bm{r}) < 0$.
\end{remark}
We next further generalize the L-OMT \eqref{LambertOT} to allow for stochastic process noise. The resulting problem, as we clarify in \cref{sec:LambertSBP}, is a generalized version of the SBP \eqref{SBP}. Our motivation behind pursuing this generalization is twofold. \emph{First}, the conditions for optimality that we derive in \cref{sec:LambertSBP} helps design nonparametric numerical algorithms, and thereby provably solve the probabilistic Lambert problem in the ``small noise'' regime. From this perspective, the process noise plays the role of computational regularization. \emph{Second}, the same theory and algorithm apply when we indeed have (not necessarily small) dynamic process noise due to imperfect actuation, stochastic disturbance in atmospheric drag etc. as mentioned before in \cref{sec:Intro}.

%%%%%%%%%%%%%%%%%%%%%%%%%%%%%%%%%%%%

\section{Stochastic regularization: Lambertian Schr\"{o}dinger Bridge}\label{sec:LambertSBP}

\subsection{Formulation and existence-uniqueness of Solution}\label{subsec:LSBPformulationexistenceuniqueness}
For not necessarily small $\varepsilon>0$, and given time horizon $[t_0,t_1]$ as before, we consider a stochastic regularized version of the L-OMT \eqref{LambertOT}: 
\begin{subequations}
\begin{align}
&\underset{\left(\rho,\bm{v}\right)\in\mathcal{P}_{01}\times\mathcal{V}}{\arg\inf}\quad\displaystyle\int_{t_0}^{t_1}\int_{\mathbb{R}^3}\left(\dfrac{1}{2}\vert \bm{v}\vert^{2} - V(\bm{r})\right)\rho(\bm{r},t)\:\differential\bm{r}\:\differential t \label{LambertSBPobj}\\  
&\dfrac{\partial\rho}{\partial t} + \nabla_{\bm{r}}\cdot \left(\rho\bm{v}\right)=\varepsilon\Delta_{\bm{r}}\rho. \label{LambertSBPdynconstr}%\\
% & \bm{r}(t=t_0) \sim \rho_0\:\text{(given)}, \quad \bm{r}(t=t_1) \sim \rho_{1}\:\text{(given)}\label{LambertSBPterminalconstr},
\end{align}
\label{LambertSBP}
\end{subequations}
which we refer to as the \emph{Lambertian Schr\"{o}dinger Bridge Problem (L-SBP)}. 

The dynamical constraint in \eqref{LambertSBP} differs from \eqref{LambertOT} by a scaled Laplacian term in the right-hand-side of \eqref{LambertSBPdynconstr}. The L-SBP \eqref{LambertSBP} generalizes the L-OMT \eqref{LambertOT} in a similar way the classical SBP \eqref{SBP} generalizes the classical OMT \eqref{BenamouBrenierOMT}.

For arbitrary $\varepsilon>0$, denote the minimizing pair for problem \eqref{LambertSBP} as $\left(\rho_{\varepsilon}^{\rm{opt}},\bm{v}_{\varepsilon}^{\rm{opt}}\right)$ wherein the subscript $\varepsilon$ emphasizes the solution's dependence on the stochastic regularization parameter $\varepsilon$. We next define the \emph{relative entropy} a.k.a. \emph{Kullback-Leibler divergence}, which plays a key role to establish existence-uniqueness for the pair $\left(\rho_{\varepsilon}^{\rm{opt}},\bm{v}_{\varepsilon}^{\rm{opt}}\right)$.

\begin{definition}\label{defKL}
(\textbf{Relative entropy a.k.a. Kullback-Leibler divergence}) Given two probability measures $\mathbb{P},\mathbb{Q}$ on some measure space $\Omega$, the relative entropy or Kullback-Leibler divergence 
\begin{align}
D_{\rm{KL}}\left(\mathbb{P}\parallel \mathbb{Q}\right) := \mathbb{E}_{\mathbb{P}}\left[\log\dfrac{\differential\mathbb{P}}{\differential\mathbb{Q}}\right] = \begin{cases}
\displaystyle\int_{\Omega}\log\left(\dfrac{\differential\mathbb{P}}{\differential\mathbb{Q}}\right)\differential\mathbb{P} & \text{if}\quad\mathbb{P} \ll \mathbb{Q},\\
+\infty & \text{otherwise,}
\end{cases}
\end{align}
where $\mathbb{E}_{\mathbb{P}}\left[\cdot\right]$ denotes the expectation operator w.r.t. the probability measure $\mathbb{P}$, the symbol $\dfrac{\differential\mathbb{P}}{\differential\mathbb{Q}}$ denotes the Radon-Nikodym derivative, and $\mathbb{P} \ll \mathbb{Q}$ means ``$\mathbb{P}$ is absolutely continuous w.r.t. $\mathbb{Q}$''.
\end{definition}

\begin{theorem}\label{ThmLSBPExistenceUniqueness} (\textbf{Existence-uniqueness of solution for L-SBP})
Let $\rho_0,\rho_1\in\mathcal{P}_2\left(\mathbb{R}^{3}\right)$, $\varepsilon>0$, and the gravitational potential $V(\cdot)\in(-\infty,0)$ as in \eqref{defPotential}. Then the minimizing tuple $\left(\rho_{\varepsilon}^{\rm{opt}},\bm{v}_{\varepsilon}^{\rm{opt}}\right)$ for problem \eqref{LambertSBP} exists and is unique.
\end{theorem}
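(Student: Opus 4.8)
The plan is to establish existence-uniqueness by transforming the L-SBP into a classical Schrödinger bridge problem via a Girsanov-type change of measure, thereby reducing the problem to a static relative-entropy minimization over path measures whose solution is known to exist and be unique. I would proceed in three stages.

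\medskip

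\emph{First}, I would recast the L-SBP \eqref{LambertSBP} as a minimization over path measures. Let $\mathbb{P}$ denote the law on path space $C([t_0,t_1];\mathbb{R}^3)$ induced by the controlled SDE \eqref{SDESamplePath} with control $\bm{v}\in\mathcal{V}$, and let $\mathbb{W}^{\varepsilon}$ denote the law of the uncontrolled reference process $\differential\bm{r}=\sqrt{2\varepsilon}\,\differential\bm{w}$. By Girsanov's theorem, the running control cost $\int_{t_0}^{t_1}\int_{\mathbb{R}^3}\frac{1}{2}\vert\bm{v}\vert^2\rho\,\differential\bm{r}\,\differential t$ equals (up to the factor $\varepsilon$) the relative entropy $\varepsilon\, D_{\rm{KL}}\!\left(\mathbb{P}\parallel\mathbb{W}^{\varepsilon}\right)$, while the additive state cost $-\int_{t_0}^{t_1}\int_{\mathbb{R}^3}V(\bm{r})\rho\,\differential\bm{r}\,\differential t$ is a bounded perturbation because $V$ is bounded. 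The key step here is to absorb the potential term into a \emph{reweighted} reference measure: define a new measure $\widetilde{\mathbb{Q}}$ via the Feynman-Kac exponential density $\differential\widetilde{\mathbb{Q}}/\differential\mathbb{W}^{\varepsilon}\propto\exp\!\big(\tfrac{1}{\varepsilon}\int_{t_0}^{t_1}V(\bm{r}(t))\,\differential t\big)$. Because $V$ is bounded (by Remark \ref{RemarkVnegative}, $V\in(-\infty,0)$ and is bounded below as well for $\vert\bm{r}\vert\geq R_{\rm{Earth}}$), this exponential weight is bounded above and below by positive constants on any bounded time horizon, so $\widetilde{\mathbb{Q}}$ is a well-defined probability measure mutually equivalent to $\mathbb{W}^{\varepsilon}$.

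\medskip

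\emph{Second}, after this reweighting the objective \eqref{LambertSBPobj} becomes, up to an additive constant and the positive scaling $\varepsilon$, the pure relative entropy $D_{\rm{KL}}\!\left(\mathbb{P}\parallel\widetilde{\mathbb{Q}}\right)$, and the constraints \eqref{LambertSBPterminalconstr} fix the two endpoint marginals of $\mathbb{P}$ to be $\rho_0,\rho_1\in\mathcal{P}_2(\mathbb{R}^3)$. Thus the L-SBP is equivalent to the \emph{static} Schrödinger problem: minimize $D_{\rm{KL}}(\mathbb{P}\parallel\widetilde{\mathbb{Q}})$ over path measures $\mathbb{P}$ with prescribed time-$t_0$ and time-$t_1$ marginals $\rho_0,\rho_1$. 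Existence then follows from the direct method: the feasible set is nonempty (one can build a Markovian coupling from $\rho_0$ to $\rho_1$ since both have finite second moment and $\widetilde{\mathbb{Q}}$ has full support) and $D_{\rm{KL}}(\cdot\parallel\widetilde{\mathbb{Q}})$ is lower semicontinuous with compact sublevel sets in the weak topology. Uniqueness follows from the \emph{strict} convexity of $\mathbb{P}\mapsto D_{\rm{KL}}(\mathbb{P}\parallel\widetilde{\mathbb{Q}})$ on the convex set of admissible couplings, which forces a unique minimizing path measure $\mathbb{P}^{\rm{opt}}$; its time-marginal flow gives $\rho_{\varepsilon}^{\rm{opt}}$ and, via the drift of the associated Markov process, the unique optimal control $\bm{v}_{\varepsilon}^{\rm{opt}}$.

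\medskip

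\emph{The main obstacle} I anticipate is justifying rigorously that the Feynman-Kac reweighting is legitimate and that no finite-second-moment or integrability pathology destroys the equivalence of measures. Concretely, one must verify that $\widetilde{\mathbb{Q}}$ still has finite entropy relative to feasible couplings and that the static Schrödinger problem with this \emph{non-Gaussian} reference (the reweighting is not a simple Gaussian kernel) retains the decomposition of its two-point marginal $\widehat{\varphi}_{\varepsilon}(\bm{r},t_0)\,\varphi_{\varepsilon}(\bm{r},t_1)$ into Schrödinger factors. This is precisely the scantily-studied generalized SBP with additive state cost flagged in the Introduction \citep{dawson1990schrodinger,aebi1992large}; the boundedness of $V$ is the crucial hypothesis that tames it, since it guarantees the reference semigroup generated by $\varepsilon\Delta_{\bm{r}}+\tfrac{1}{\varepsilon}V$ has a strictly positive, bounded transition density. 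Once positivity and boundedness of this kernel are secured, the classical Beurling-Jamison-Fortet fixed-point argument for the existence and uniqueness of the Schrödinger factors carries through verbatim, closing the proof.
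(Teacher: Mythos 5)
Your proposal follows essentially the same route as the paper's proof: Girsanov's theorem converts the control cost into relative entropy with respect to the Wiener measure on path space, the potential term is absorbed into a Feynman--Kac (Gibbs) reweighted reference measure, and existence-uniqueness follows from minimizing the strictly convex map $\mathbb{P}\mapsto D_{\rm{KL}}\left(\mathbb{P}\parallel\widetilde{\mathbb{Q}}\right)$ over the closed convex set of path measures with prescribed endpoint marginals. The only discrepancy is a harmless constant: with diffusion coefficient $\sqrt{2\varepsilon}$ the control cost equals $2\varepsilon$ times the relative entropy (not $\varepsilon$), so the correct Gibbs exponent is $\frac{1}{2\varepsilon}\int_{t_0}^{t_1}V(\bm{r})\:\differential t$ as in the paper, rather than $\frac{1}{\varepsilon}\int_{t_0}^{t_1}V(\bm{r})\:\differential t$.
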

\begin{proof}
The main idea behind this proof is to recast \eqref{LambertSBP} as a relative entropy minimization problem w.r.t. a reference Gibbs measure arising from the potential $V$. Here, the role of the reference measure will be played by the Wiener measure on the path space $\Omega:=C\left([t_0,t_1];\mathbb{R}^{3}\right)$, i.e., the space of continuous curves in $\mathbb{R}^{3}$ parameterized by $t\in[t_0,t_1]$. To this end, we follow the developments as in \cite{wakolbinger1990schrodinger} and \cite{dawson1990schrodinger}; see also \cite{leonard2011stochastic}. We give a self-contained proof here 
because \cite{wakolbinger1990schrodinger} and \cite{dawson1990schrodinger} may not be not readily accessible, and also because as written for probabilists, both omit certain technical details. The latter may otherwise pose challenges to theoretically minded engineers--our intended audience--in adapting the core ideas for the case in hand.  

Let $\mathcal{M}\left(\Omega\right)$ be the collection of all probability measures on the path space $\Omega$. Recall that \eqref{LambertSBPdynconstr} corresponds to the It\^{o} diffusion process \eqref{SDESamplePath} where $\bm{v}$ is non-anticipating, i.e., adapted to the filtration generated up until time $t$ for all $t\in[t_0,t_1]$. Let $\mathbb{W}\in\mathcal{M}\left(\Omega\right)$ be a Wiener measure, and consider a measure $\mathbb{P}\in\mathcal{M}\left(\Omega\right)$ generated by the It\^{o} diffusion \eqref{SDESamplePath}. Using Girsanov's theorem \cite[Thm. 8.6.6]{oksendal2013stochastic}, \cite[Ch. 3.5]{karatzas2012brownian}, we find
\begin{subequations}
\begin{align}
\dfrac{\differential\mathbb{P}}{\differential \mathbb{W}} &= \dfrac{\differential\mathbb{P}_0}{\differential \mathbb{W}_0}\exp\left(\int_{t_0}^{t_1}\dfrac{\bm{v}}{\sqrt{2\varepsilon}} \differential\bm{w}(t) +\int_{t_0}^{t_1}\dfrac{1}{2}\dfrac{\vert \bm{v} \vert^2}{2\varepsilon}\differential t\right), \qquad \mathbb{P} \;\: \text{a.s.},\\
\Leftrightarrow\quad\log\dfrac{\differential\mathbb{P}}{\differential \mathbb{W}} &= \log\dfrac{\differential\mathbb{P}_0}{\differential \mathbb{W}_0} + \dfrac{1}{\sqrt{2\varepsilon}}\int_{t_0}^{t_1}\bm{v} \differential\bm{w}(t) + \dfrac{1}{4\varepsilon}\int_{t_0}^{t_1}\vert \bm{v}\vert^2 \differential t, \qquad \mathbb{P} \;\: \text{a.s.},
\end{align}
\label{ApplyGirsanov}    
\end{subequations}
where $\mathbb{P}_0,\mathbb{W}_0$ are the distributions of $\bm{r}(t=t_0)$ under $\mathbb{P}$ and $\mathbb{W}$, respectively. 

Since $\bm{v}\in\mathcal{V}$ as defined in \eqref{DefFerasibleControl}, we have
\begin{align}
\mathbb{E}_{\mathbb{P}}\left[\displaystyle\int_{t_0}^{t_1}\vert\bm{v}\vert^2\differential t\right] < \infty.
\end{align}
So for any $t\geq t_0$, the stochastic process 
$\overline{\bm{v}}(t) := \int_{t_0}^{t}\bm{v} \differential\bm{w}(\tau)$ is a martingale, and consequently has constant expected value $\mathbb{E}_{\mathbb{P}}\left[\overline{\bm{v}}(t)\right] = \mathbb{E}_{\mathbb{P}}\left[\overline{\bm{v}}(t_0)\right] = 0$. Therefore, applying the expectation operator w.r.t. $\mathbb{P}$ to both sides of \eqref{ApplyGirsanov}, we obtain
\begin{align}
\mathbb{E}_{\mathbb{P}}\left[\log\dfrac{\differential\mathbb{P}}{\differential \mathbb{Q}}\right]
= \mathbb{E}_{\mathbb{P}}\left[\log\dfrac{\differential\mathbb{P}_0}{\differential \mathbb{W}_0}\right]  + \dfrac{1}{4\varepsilon}\int_{t_0}^{t_1}\mathbb{E}_{\mathbb{P}}\left[\vert \bm{v}\vert^2\right]\differential t.
\label{ExpectedLogRadonNikodym}
\end{align}

Recalling that the potential $V(\cdot)$ is negative and lower bounded (see Remark \ref{RemarkVnegative}), we next consider the Gibbs measure
$$\dfrac{\exp\left(\dfrac{1}{2\varepsilon}\displaystyle\int_{t_0}^{t_1}V(\bm{r})\differential t\right)\mathbb{W}}{Z}\in\mathcal{M}\left(\Omega\right), \; \text{where}\;Z := \int_{\Omega}\exp\left(\dfrac{1}{2\varepsilon}\displaystyle\int_{t_0}^{t_1}V(\bm{r})\differential t\right)\differential\mathbb{W}.$$
By Definition \ref{defKL}, we get
\begin{align}
&D_{\rm{KL}}\left(\mathbb{P}\parallel \dfrac{\exp\left(\dfrac{1}{2\varepsilon}\displaystyle\int_{t_0}^{t_1}V(\bm{r})\differential t\right)\mathbb{W}}{Z}\right)\nonumber\\
&= \log Z + \mathbb{E}_{\mathbb{P}}\left[-\dfrac{1}{2\varepsilon}\displaystyle\int_{t_0}^{t_1}V(\bm{r})\differential t\right] + \mathbb{E}_{\mathbb{P}}\left[\log\dfrac{\differential\mathbb{P}}{\differential\mathbb{W}}\right] \nonumber\\
&= \log Z + \mathbb{E}_{\mathbb{P}}\left[\log\dfrac{\differential\mathbb{P}_0}{\differential \mathbb{W}_0}\right]  + \dfrac{1}{2\varepsilon}\int_{t_0}^{t_1}\mathbb{E}_{\mathbb{P}}\left[\dfrac{1}{2}\vert \bm{v}\vert^2 -V(\bm{r})\right]\differential t,
\label{RelativeEntropy}
\end{align}
where the last equality uses \eqref{ExpectedLogRadonNikodym}. Now, let 
\begin{align}
\Pi_{01}:=\big\{\mathbb{M}\in\mathcal{M}\left(\Omega\right) \mid\mathbb{M}\;\text{has marginal }\:\rho_{i}\differential\bm{r}\;\text{at time}\;t_i \;\forall i\in\{0,1\},\:\rho_0,\rho_1\in\mathcal{P}_2\left(\mathbb{R}^{3}\right)\big\}.
\label{defPi01}    
\end{align}
Since $\log Z + \mathbb{E}_{\mathbb{P}}\left[\log\dfrac{\differential\mathbb{P}_0}{\differential \mathbb{W}_0}\right]$ is constant over $\Pi_{01}$, we conclude from \eqref{RelativeEntropy} that the stochastic optimal control problem \eqref{LambertSBP} is equivalent to the stochastic calculus of variations problem:
\begin{align}
\underset{\mathbb{P}\in\Pi_{01}}{\arg\inf}\quad D_{\rm{KL}}\left(\mathbb{P}\parallel \dfrac{\exp\left(\dfrac{1}{2\varepsilon}\displaystyle\int_{t_0}^{t_1}V(\bm{r})\differential t\right)\mathbb{W}}{Z}\right).
\label{LSBPasKLminimization}
\end{align}
From \eqref{defPi01}, the set $\Pi_{01}$ is closed and convex with non-empty interior. Furthermore, the mapping $\mathbb{P} \mapsto D_{\rm{KL}}\left(\mathbb{P}\parallel \mathbb{Q}\right)$ is strictly convex over $\Pi_{01}$ for a fixed measure $\mathbb{Q}$. Hence there exists unique solution for \eqref{LSBPasKLminimization}, or equivalently for \eqref{LambertSBP}. 
\end{proof}

\begin{remark}\label{remarkSanovThmLargeDevPrinciple}
Thanks to the conditional Sanov's theorem \cite[Sec. 4]{wakolbinger1990schrodinger}, \cite{csiszar1984sanov}, \eqref{LSBPasKLminimization} also shows that the problem \eqref{LambertSBP} enjoys a large deviations principle with rate function $D_{\rm{KL}}\left(\cdot\parallel \frac{\exp\left(\dfrac{1}{2\varepsilon}\int_{t_0}^{t_1}V(\bm{r})\differential t\right)\mathbb{W}}{Z}\right)$. This can be understood intuitively as follows. For $n$ i.i.d. random paths in $\Omega$ with distribution $\mathbb{W}$, when $n$ is large, the more likely paths in $\Pi_{01}$ correspond to those $\mathbb{P}$ which make the objective in \eqref{LSBPasKLminimization} small. In particular, for $n\rightarrow\infty$, the path likelihood is asymptotically equivalent to $$\exp\left(-n \underset{\mathbb{P}\in\Pi_{01}}{\inf}\:D_{\rm{KL}}\left(\mathbb{P}\parallel \frac{\exp\left(\dfrac{1}{2\varepsilon}\displaystyle\int_{t_0}^{t_1}V(\bm{r})\differential t\right)\mathbb{W}}{Z}\right)\right).$$
The most likely path in $\Pi_{01}$ corresponds to the minimizer of \eqref{LSBPasKLminimization}.
\end{remark}

As $\varepsilon \downarrow 0$, clearly the L-SBP \eqref{LambertSBP} reduces to the L-OMT \eqref{LambertOT}. More importantly, as $\varepsilon \downarrow 0$, the solution to \eqref{LambertSBP} converges \cite{leonard2012schrodinger} to the solution of \eqref{LambertOT}, i.e., $$\left(\rho_{\varepsilon}^{\rm{opt}}, \bm{v}_{\varepsilon}^{\rm{opt}}\right)\xrightarrow{\varepsilon\downarrow 0} \left(\rho^{\rm{opt}},\bm{v}^{\rm{opt}}\right).$$
In the following, we derive the system of equations that solves the optimal pair $\left(\rho_{\varepsilon}^{\rm{opt}}, \bm{v}_{\varepsilon}^{\rm{opt}}\right)$.

\subsection{Conditions for optimality}\label{subsec:LSBPoptimality}
The Proposition \ref{PropCondOptimalityLSBP} next shows that the first order necessary conditions for optimality for the L-SBP \eqref{LambertSBP} takes the form of a coupled system of nonlinear PDEs.
\begin{proposition}[Conditions for optimality for L-SBP]\label{PropCondOptimalityLSBP} The pair $\left(\rho_{\varepsilon}^{\rm{opt}},\bm{v}_{\varepsilon}^{\rm{opt}}\right)$ solving the L-SBP \eqref{LambertSBP}, satisfies the system of coupled nonlinear PDEs
\begin{subequations}
    \begin{align}
        &\dfrac{\partial\psi_{\varepsilon}}{\partial t} +\dfrac{1}{2}\vert \nabla_{\bm{r}} \psi_{\varepsilon} \vert^{2} +  \varepsilon\Delta_{\bm{r}}\psi_{\varepsilon} = - V(\bm{r}),\label{HJBSBP}\\
         &\dfrac{\partial\rho_{\varepsilon}^{\rm{opt}}}{\partial t} + \nabla_{\bm{r}}\cdot \left(\rho_{\varepsilon}^{\rm{opt}}\nabla_{\bm{r}} \psi_{\varepsilon}\right)=\varepsilon\Delta_{\bm{r}}\rho_{\varepsilon}^{\rm{opt}}, \label{FPKSBP}
    \end{align}
    \label{FirstOrderConditions}
\end{subequations}
with boundary conditions
\begin{align}
    \rho_{\varepsilon}^{\rm{opt}}(\bm{r},t=t_0) = \rho_0(\bm{r}), \quad \rho_{\varepsilon}^{\rm{opt}}(\bm{r},t=t_1) = \rho_1(\bm{r}),
    \label{BoundaryConditionsForProp1}
\end{align}
where $\psi_{\varepsilon}\in C^{1,2}\left(\mathbb{R}^{3};[t_0,t_1]\right)$, and the optimal control 
\begin{align}
    \bm{v}_{\varepsilon}^{\rm{opt}} = \nabla_{\bm{r}} \psi_{\varepsilon}(\bm{r}, t).
    \label{inputSBP}
\end{align}
\end{proposition}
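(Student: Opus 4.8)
The plan is to treat \eqref{LambertSBP} as a PDE-constrained calculus-of-variations problem and to extract its first-order stationarity (Euler--Lagrange) conditions via a Lagrange multiplier $\psi(\bm{r},t)$ that adjoins the Fokker--Planck--Kolmogorov constraint \eqref{LambertSBPdynconstr}. Concretely, I would form the Lagrangian
\[
\mathcal{L}(\rho,\bm{v},\psi) := \int_{t_0}^{t_1}\int_{\mathbb{R}^3}\left[\left(\tfrac{1}{2}\vert\bm{v}\vert^2 - V(\bm{r})\right)\rho + \psi\left(\dfrac{\partial\rho}{\partial t} + \nabla_{\bm{r}}\cdot(\rho\bm{v}) - \varepsilon\Delta_{\bm{r}}\rho\right)\right]\differential\bm{r}\,\differential t
\]
and integrate by parts to transfer all derivatives off $\rho$ and onto $\psi$: in time $\int_{t_0}^{t_1}\psi\,\partial_t\rho\,\differential t = \left[\psi\rho\right]_{t_0}^{t_1} - \int_{t_0}^{t_1}\rho\,\partial_t\psi\,\differential t$, and in space $\int_{\mathbb{R}^3}\psi\,\nabla_{\bm{r}}\cdot(\rho\bm{v})\,\differential\bm{r} = -\int_{\mathbb{R}^3}\rho\,\langle\nabla_{\bm{r}}\psi,\bm{v}\rangle\,\differential\bm{r}$ together with the double integration by parts $-\varepsilon\int_{\mathbb{R}^3}\psi\,\Delta_{\bm{r}}\rho\,\differential\bm{r} = -\varepsilon\int_{\mathbb{R}^3}\rho\,\Delta_{\bm{r}}\psi\,\differential\bm{r}$.

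First I would impose stationarity in $\bm{v}$. For fixed $\rho>0$ the reduced integrand is strictly convex and quadratic in $\bm{v}$, so pointwise minimization $\partial(\cdot)/\partial\bm{v}=\bm{0}$ gives $\rho\bm{v}-\rho\nabla_{\bm{r}}\psi=\bm{0}$, i.e. $\bm{v}_{\varepsilon}^{\mathrm{opt}}=\nabla_{\bm{r}}\psi$, which is \eqref{inputSBP}. Next, stationarity in $\rho$ forces the coefficient multiplying $\rho$ in the reduced integrand to vanish, namely $\tfrac{1}{2}\vert\bm{v}\vert^2 - V - \partial_t\psi - \langle\nabla_{\bm{r}}\psi,\bm{v}\rangle - \varepsilon\Delta_{\bm{r}}\psi = 0$; here the time-boundary term $\left[\psi\rho\right]_{t_0}^{t_1}$ drops out because the admissible variations $\delta\rho$ vanish at $t_0,t_1$ (the endpoint marginals are fixed). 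Substituting $\bm{v}=\nabla_{\bm{r}}\psi$ collapses this to $\partial_t\psi + \tfrac{1}{2}\vert\nabla_{\bm{r}}\psi\vert^2 + \varepsilon\Delta_{\bm{r}}\psi = -V$, which is precisely the Hamilton--Jacobi--Bellman equation \eqref{HJBSBP}. Finally, stationarity in $\psi$ returns the constraint \eqref{LambertSBPdynconstr} with $\bm{v}=\nabla_{\bm{r}}\psi$, i.e. the FPK equation \eqref{FPKSBP}, and the fixed marginals carry over verbatim as the boundary conditions \eqref{BoundaryConditionsForProp1}.

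The main obstacle is not the formal computation but its justification. The spatial integrations by parts require the boundary fluxes at infinity to vanish; I would invoke $\rho(\cdot,t)\in\mathcal{P}_2(\mathbb{R}^3)$ (finite second moment, hence adequate decay) together with $\psi\in C^{1,2}$ so that the flux terms $\psi\rho\bm{v}$, $\psi\nabla_{\bm{r}}\rho$, and $\rho\nabla_{\bm{r}}\psi$ carry no contribution through the far boundary. More substantively, the Euler--Lagrange conditions are \emph{a priori} only necessary; to conclude that the triple $(\rho_{\varepsilon}^{\mathrm{opt}},\bm{v}_{\varepsilon}^{\mathrm{opt}},\psi)$ solving \eqref{FirstOrderConditions}--\eqref{BoundaryConditionsForProp1} is genuinely the optimizer, I would appeal to the convexity already established in Theorem~\ref{ThmLSBPExistenceUniqueness}. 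There, problem \eqref{LambertSBP} was recast as the strictly convex relative-entropy minimization \eqref{LSBPasKLminimization} over the convex set $\Pi_{01}$; equivalently, in the Benamou--Brenier momentum variable $\bm{m}:=\rho\bm{v}$ the running cost $\tfrac{1}{2}\vert\bm{v}\vert^2\rho = \vert\bm{m}\vert^2/(2\rho)$ is jointly convex in $(\rho,\bm{m})$ while \eqref{LambertSBPdynconstr} becomes affine. This convexity promotes the stationary point to the unique global minimizer, consistent with the existence-uniqueness already proved, and closes the argument.
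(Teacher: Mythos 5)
Your proposal is correct and follows essentially the same route as the paper's own proof: adjoin the Fokker--Planck--Kolmogorov constraint with a multiplier $\psi$, integrate by parts in $t$ and $\bm{r}$ (discarding the endpoint term, which is constant over the feasible set, and assuming vanishing fluxes at infinity), minimize pointwise in $\bm{v}$ to get \eqref{inputSBP}, force the coefficient of $\rho$ to vanish to get \eqref{HJBSBP}, and recover \eqref{FPKSBP} with \eqref{BoundaryConditionsForProp1} from primal feasibility. Your closing remarks on sufficiency via the convexity established in Theorem~\ref{ThmLSBPExistenceUniqueness} go slightly beyond what the paper records, but they do not change the argument.
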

\begin{proof}
The Lagrangian for the L-SBP \eqref{LambertSBP} is the functional 
\begin{align}
\int_{t_0}^{t_1}\!\!\!\int_{\mathbb{R}^3}\!\!\left(\!\left(\frac{1}{2} \vert \bm{v}\vert^2 - V(\bm{r}) \!\right) \rho(\bm{r},t) + \psi_{\varepsilon}(\bm{r}, t) \times \left(\!\frac{\partial \rho}{\partial t} + \nabla_{\bm{r}} \cdot (\rho \bm{v}) - \varepsilon \Delta_{\bm{r}} \rho \!\right)\!\right)\differential\bm{r}\:\differential t,
\label{LSBPLagrangian}
\end{align}
where the Lagrange multiplier $\psi_{\varepsilon}\in C^{1,2}\left(\mathbb{R}^{3};[t_0,t_1]\right)$. The functional \eqref{LSBPLagrangian} depends on the primal-dual variable trio $(\rho,\bm{v},\psi_{\varepsilon})$. The idea now is to perform unconstrained minimization of \eqref{LSBPLagrangian} over the feasible space $\mathcal{P}_{01}\times\mathcal{V}$. 

For the term $\int_{t_0}^{t_1} \int_{\mathbb{R}^3} \psi_{\varepsilon}(\bm{r}, t) \frac{\partial \rho}{\partial t}\:\differential\bm{r}\:\differential t$ in \eqref{LSBPLagrangian}, we apply Fubini-Tonelli theorem to switch the order of the integrals, and perform integration by parts w.r.t. $t$, to obtain
\begin{align}
    \int_{t_0}^{t_1} \!\!\!\int_{\mathbb{R}^3} \psi_{\varepsilon}(\bm{r}, t) \frac{\partial \rho}{\partial t}\:\differential\bm{r}\:\differential t = &\underbrace{\int_{\mathbb{R}^3} \!\!\left(\psi_{\varepsilon}(\bm{r},t_1)\rho_1(\bm{r})\nonumber-\psi_{\varepsilon}(\bm{r},t_0)\rho_0(\bm{r})\right)\differential\bm{r}}_{\text{constant over } \mathcal{P}_{01}\times\mathcal{V}}\\ &\qquad\qquad\qquad\qquad\qquad\qquad- \int_{t_0}^{t_1} \!\!\!\int_{\mathbb{R}^3}\rho(\bm{r}, t) \frac{\partial \psi_{\varepsilon}}{\partial t}\:\differential\bm{r}\:\differential t.
    \label{IntByParts}
\end{align}
Next, for the term $\int_{t_0}^{t_1} \int_{\mathbb{R}^3}\psi_{\varepsilon}(\bm{r},t)\left(\nabla_{\bm{r}} \cdot (\rho \bm{v}) - \varepsilon \Delta_{\bm{r}} \rho \right)\differential\bm{r}\:\differential t$ in \eqref{LSBPLagrangian}, integration by parts w.r.t. $\bm{r}$ while assuming the limits at $|\bm{r}|\rightarrow\infty$ are zero, gives
\begin{align}
\int_{t_0}^{t_1} \int_{\mathbb{R}^3}\psi_{\varepsilon}(\bm{r},t)\left(\nabla_{\bm{r}} \cdot (\rho \bm{v}) - \varepsilon \Delta_{\bm{r}} \rho(\bm{r}, t) \right)\differential\bm{r}\:\differential t &=-\int_{t_0}^{t_1} \int_{\mathbb{R}^3}\langle\nabla_{\bm{r}}\psi_{\varepsilon}, \bm{v}\rangle\rho\differential\bm{r}\differential t\nonumber\\ 
&\qquad - \varepsilon\int_{t_0}^{t_1} \int_{\mathbb{R}^3}\langle\nabla_{\bm{r}}\rho,\nabla_{\bm{r}}\psi_{\varepsilon}\rangle\differential\bm{r}\differential t. 
\label{IBPr}    
\end{align}
For the second integral above, performing integration by parts w.r.t. $\bm{r}$ once more, \eqref{IBPr} becomes
\begin{align}
\int_{t_0}^{t_1} \int_{\mathbb{R}^3}\psi_{\varepsilon}(\bm{r},t)\left(\nabla_{\bm{r}} \cdot (\rho \bm{v}) - \varepsilon \Delta_{\bm{r}} \rho(\bm{r}, t) \right)\differential\bm{r}\:\differential t = &-\int_{t_0}^{t_1} \int_{\mathbb{R}^3}\langle\nabla_{\bm{r}}\psi_{\varepsilon}, \bm{v}\rangle\rho(\bm{r},t)\differential\bm{r}\differential t\nonumber\\
&+ \varepsilon\int_{t_0}^{t_1} \rho(\bm{r},t)\Delta_{\bm{r}}\psi_{\varepsilon}\differential\bm{r}\:\differential t.
\label{IntByPartsTwoFold}
\end{align}
Substituting \eqref{IntByParts} and \eqref{IntByPartsTwoFold} back in \eqref{LSBPLagrangian}, and dropping the constant term, we arrive at the expression
\begin{align}
\int_{t_0}^{t_1}\int_{\mathbb{R}^3}\left(\dfrac{1}{2}\vert\bm{v}\vert^{2} - V(\bm{r}) - \dfrac{\partial\psi_{\varepsilon}}{\partial t} -  \langle\nabla_{\bm{r}}\psi_{\varepsilon},\bm{v}\rangle - \varepsilon\Delta_{\bm{r}}\psi_{\varepsilon} \right) \rho(\bm{r}, t) \:\differential\bm{r}\:\differential t.
\label{NewLSBPLagrangian}
\end{align}
Pointwise minimization of \eqref{NewLSBPLagrangian} w.r.t. $\bm{v}$ for a fixed $\rho\in\mathcal{P}_{01}$, gives optimal control
\begin{align*}
    \bm{v}_{\varepsilon}^{\rm{opt}} = \nabla_{\bm{r}} \psi_{\varepsilon}(\bm{r}, t),
\end{align*}
which is precisely \eqref{inputSBP}. Evaluating \eqref{NewLSBPLagrangian} at this optimal solution, and equating the resulting expression to zero yields the dynamic programming equation
\begin{align}
    0 =\int_{t_0}^{t_1}\int_{\mathbb{R}^3}\left(- \dfrac{1}{2}\vert\nabla_{\bm{r}} \psi_{\varepsilon} \vert^{2} - V(\bm{r}) - \dfrac{\partial\psi_{\varepsilon}}{\partial t} -  \varepsilon\Delta_{\bm{r}}\psi_{\varepsilon} \right) \rho_{\varepsilon}^{\rm{opt}}(\bm{r}, t) \:\differential\bm{r}\:\differential t.
    \label{HJBbeforefinal}
\end{align}
For \eqref{HJBbeforefinal} to hold for arbitrary $\rho\in\mathcal{P}_{01}$, we must have
\begin{align}
    \dfrac{1}{2}\vert\nabla_{\bm{r}} \psi_{\varepsilon} \vert^{2} + \dfrac{\partial\psi_{\varepsilon}}{\partial t} +  \varepsilon\Delta_{\bm{r}}\psi_{\varepsilon} = - V(\bm{r}),
\end{align}
which is the PDE \eqref{HJBSBP}.

Substituting \eqref{inputSBP} into the primal feasibility constraint \eqref{LambertSBPdynconstr} yields \eqref{FPKSBP}. Finally, the boundary conditions \eqref{BoundaryConditionsForProp1} follow from the given endpoint constraints encoded in $\mathcal{P}_{01}$.
\end{proof}

For $\varepsilon=0$, the proof above goes through, mutatis mutandis, giving the following for the L-OMT solution $(\rho^{\text{opt}},\bm{v}^{\text{opt}})$ mentioned at the end of \cref{subsec:LSBPformulationexistenceuniqueness}.
\begin{corollary}[Conditions for optimality for L-OMT]\label{CorollaryLOMTconditionsOptimality} 
The pair $(\rho^{\rm{opt}},\bm{v}^{\rm{opt}})$ solving the L-OMT \eqref{LambertOT}, satisfies the system of coupled nonlinear PDEs
\begin{subequations}
    \begin{align}
        &\dfrac{\partial\psi}{\partial t} +\dfrac{1}{2}\vert \nabla_{\bm{r}} \psi \vert^{2} + V(\bm{r}) = 0,\label{HJBOMT}\\
         &\dfrac{\partial\rho^{\rm{opt}}}{\partial t} + \nabla_{\bm{r}}\cdot \left(\rho^{\rm{opt}}\nabla_{\bm{r}} \psi\right)=0, \label{LiouvilleOMT}
    \end{align}
    \label{FirstOrderConditionsLMOT}
\end{subequations}
with boundary conditions $\rho^{\rm{opt}}(\bm{r},t=t_0) = \rho_0(\bm{r}), \rho^{\rm{opt}}(\bm{r},t=t_1) = \rho_1(\bm{r})$, where $\psi\in C^{1,1}\left(\mathbb{R}^{3};[t_0,t_1]\right)$, and the optimal control 
$\bm{v}^{\rm{opt}} = \nabla_{\bm{r}} \psi(\bm{r}, t)$.
\end{corollary}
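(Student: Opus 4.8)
The plan is to specialize the variational argument in the proof of Proposition \ref{PropCondOptimalityLSBP} to the case $\varepsilon=0$, since the L-OMT \eqref{LambertOT} is exactly the L-SBP \eqref{LambertSBP} with the Laplacian term in \eqref{LambertSBPdynconstr} deleted. First I would form the Lagrangian functional obtained from \eqref{LSBPLagrangian} by setting $\varepsilon=0$, namely
\begin{align*}
\int_{t_0}^{t_1}\int_{\mathbb{R}^3}\left(\left(\tfrac{1}{2}\vert\bm{v}\vert^2 - V(\bm{r})\right)\rho(\bm{r},t) + \psi(\bm{r},t)\left(\dfrac{\partial\rho}{\partial t} + \nabla_{\bm{r}}\cdot(\rho\bm{v})\right)\right)\differential\bm{r}\:\differential t,
\end{align*}
with the Lagrange multiplier $\psi$ enforcing the Liouville constraint \eqref{LambertOTdynconstr} in place of the Fokker--Planck constraint. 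The temporal integration by parts \eqref{IntByParts} is unchanged and again produces only the endpoint term $\int_{\mathbb{R}^3}\left(\psi(\bm{r},t_1)\rho_1 - \psi(\bm{r},t_0)\rho_0\right)\differential\bm{r}$, which is constant over $\mathcal{P}_{01}\times\mathcal{V}$ and may be discarded. The spatial integration by parts is simpler here: only the single step in \eqref{IBPr} survives, since there is no $\varepsilon\Delta_{\bm{r}}\rho$ term to integrate by parts a second time as in \eqref{IntByPartsTwoFold}. Consequently the reduced functional is \eqref{NewLSBPLagrangian} with its $\varepsilon\Delta_{\bm{r}}\psi$ term removed.

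Next I would carry out the same two-stage pointwise optimization. The $\bm{v}$-dependence of the integrand, $\tfrac{1}{2}\vert\bm{v}\vert^2 - \langle\nabla_{\bm{r}}\psi,\bm{v}\rangle$, is identical to the $\varepsilon>0$ case, so pointwise minimization over $\bm{v}$ for fixed $\rho$ again returns $\bm{v}^{\rm{opt}} = \nabla_{\bm{r}}\psi$, proving the claimed feedback form. Substituting this minimizer and demanding that the resulting integral vanish for every admissible $\rho\in\mathcal{P}_{01}$ forces the $\rho$-coefficient of the integrand to vanish pointwise, which is exactly the first-order Hamilton--Jacobi equation \eqref{HJBOMT}; inserting $\bm{v}^{\rm{opt}}=\nabla_{\bm{r}}\psi$ into \eqref{LambertOTdynconstr} gives the Liouville equation \eqref{LiouvilleOMT}, and the endpoint data \eqref{LambertOTterminalconstr} give the boundary conditions.

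The one place where ``mutatis mutandis'' needs genuine care---and the main obstacle---is regularity. With $\varepsilon>0$ the second-order term $\varepsilon\Delta_{\bm{r}}\psi$ regularizes the value function and justifies $\psi\in C^{1,2}$; at $\varepsilon=0$ that term is absent, so \eqref{HJBOMT} is a first-order Hamilton--Jacobi equation whose solution is generically only Lipschitz and need not be twice differentiable in space. This is why the corollary correctly weakens the regularity to $\psi\in C^{1,1}$, and why the pointwise ``for arbitrary $\rho$'' argument must be read in the almost-everywhere (or viscosity) sense rather than classically. The cleanest way to secure the statement is not to rederive regularity from scratch but to invoke the existence-uniqueness already granted by Theorem \ref{thm:existenceuniqueness} (via Figalli's Tonelli-Lagrangian theory), which supplies the optimal pair; the computation above then merely identifies the PDE system that this pair must satisfy. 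I would also record the consistency check that \eqref{HJBOMT}--\eqref{LiouvilleOMT} is the formal $\varepsilon\downarrow 0$ limit of \eqref{HJBSBP}--\eqref{FPKSBP}, matching the convergence $\left(\rho_{\varepsilon}^{\rm{opt}},\bm{v}_{\varepsilon}^{\rm{opt}}\right)\to\left(\rho^{\rm{opt}},\bm{v}^{\rm{opt}}\right)$ recorded before the proposition.
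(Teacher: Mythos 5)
Your proposal is correct and follows exactly the route the paper takes: the paper gives no separate proof of Corollary \ref{CorollaryLOMTconditionsOptimality}, stating only that the proof of Proposition \ref{PropCondOptimalityLSBP} ``goes through, mutatis mutandis'' at $\varepsilon=0$, which is precisely the specialization you carry out (dropping the second spatial integration by parts, recovering $\bm{v}^{\rm{opt}}=\nabla_{\bm{r}}\psi$, and obtaining the first-order HJB and Liouville equations). Your added observations---that the weakened regularity $\psi\in C^{1,1}$ reflects the loss of the regularizing $\varepsilon\Delta_{\bm{r}}\psi$ term, and that the system is the formal $\varepsilon\downarrow 0$ limit of \eqref{FirstOrderConditions}---are consistent with and slightly more careful than the paper's terse treatment.
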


\begin{remark}
The second order PDE \eqref{HJBSBP} is a Hamilton-Jacobi-Bellman (HJB) equation, whereas the second order PDE \eqref{FPKSBP} is a controlled Fokker-Planck-Kolmogorov (FPK) equation. At the equation level, these two are coupled one way: \eqref{HJBSBP} only depends on the dual variable $\psi_{\varepsilon}$ while \eqref{FPKSBP} depends on both $\psi_{\varepsilon}$ and $\rho_{\varepsilon}^{\rm{opt}}$. The boundary conditions \eqref{BoundaryConditionsForProp1}, however, are only available for the endpoints of the primal variable $\rho_{\varepsilon}^{\rm{opt}}$. 
\end{remark}

We next focus on finding a solution strategy for the system of coupled \emph{nonlinear} PDEs \eqref{FirstOrderConditions} with unconventional boundary conditions \eqref{BoundaryConditionsForProp1}. In the following Theorem \ref{ThmHopfColeReactionDiffusion}, we use the Hopf-Cole transform \cite{hopf1950partial,cole1951quasi} to show that the system \eqref{FirstOrderConditions}-\eqref{BoundaryConditionsForProp1} can be exactly converted to a system of \emph{linear} reaction-diffusion PDEs at the expense of moving the coupling to the boundary conditions. In \cref{sec:Algorithm}, we explain how this transformed system is malleable for numerical computation.  

\begin{theorem} [Boundary-coupled system of linear reaction-diffusion PDEs] \label{ThmHopfColeReactionDiffusion}Consider the L-SBP \eqref{LambertSBP} with given $[t_0,t_1]$, $V$, $\varepsilon$, $\rho_0$, $\rho_1$ as in Theorem \ref{ThmLSBPExistenceUniqueness}. Let $\left(\rho^{\mathrm{opt}}_\varepsilon, \psi_{\varepsilon}\right)$ be the solution of \eqref{FirstOrderConditions}-\eqref{BoundaryConditionsForProp1}. Consider the Hopf-Cole transform $\left(\rho^{\mathrm{opt}}_\varepsilon, \psi_{\varepsilon}\right) \mapsto(\widehat{\varphi}_{\varepsilon},\varphi_{\varepsilon})$ defined as
\begin{subequations}
\begin{align}
\varphi_{\varepsilon} &= \exp \left( \frac{\psi_{\varepsilon}}{2 \varepsilon} \right), \label{phi_epsilon}\\
\widehat{\varphi}_{\varepsilon} &= \rho^{\mathrm{opt}}_\varepsilon \exp \left( - \frac{\psi_{\varepsilon}}{2 \varepsilon} \right).\label{phi_hat_epsilon}
\end{align} 
\label{defphiphihat}
\end{subequations}
Then, the pair $(\widehat{\varphi}_{\varepsilon},\varphi_{\varepsilon})$ called the Schr\"{o}dinger factors, solve the system of forward and backward linear reaction-diffusion PDEs:
\begin{subequations}
\begin{align}
\displaystyle\frac{\partial\widehat{\varphi}_{\varepsilon}}{\partial t} &= \left(\varepsilon\Delta_{\bm{r}} + \frac{1}{2\varepsilon} V(\bm{r})\right)\widehat{\varphi}_{\varepsilon},    
\label{FactorPDEForward}\\
\displaystyle\frac{\partial\varphi_{\varepsilon}}{\partial t} &= -\left(\varepsilon\Delta_{\bm{r}} + \frac{1}{2\varepsilon}V(\bm{r})\right)\varphi_{\varepsilon},
\label{FactorPDEBackward}
\end{align}
\label{FactorPDEs}
\end{subequations}
with coupled boundary conditions
\begin{align}
\widehat{\varphi}_{\varepsilon}(\cdot,t=t_0)\varphi_{\varepsilon}(\cdot,t=t_0) = \rho_0, \quad \widehat{\varphi}_{\varepsilon}(\cdot,t=t_1)\varphi_{\varepsilon}(\cdot,t=t_1) = \rho_1.
\label{factorBC}    
\end{align}
For all $t\in[t_0,t_1]$, the minimizing pair for \eqref{LambertSBP} can be recovered from the Schr\"{o}dinger factors $(\widehat{\varphi}_{\varepsilon},\varphi_{\varepsilon})$ as
\begin{subequations}
\begin{align}
\rho_{\varepsilon}^{\rm{opt}}(\cdot,t) &= \widehat{\varphi}_{\varepsilon}(\cdot,t)\varphi_{\varepsilon}(\cdot,t) \label{OptimallyControlledJointPDF},\\
\bm{v}_{\varepsilon}^{\rm{opt}}(t,\cdot) &= 2 \varepsilon\nabla_{(\cdot)}\log\varphi_{\varepsilon}(\cdot,t).\label{OptimalVelocity}
\end{align}
\label{RecoverOptimalSolution}
\end{subequations}
\end{theorem}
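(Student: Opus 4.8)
The plan is to substitute the Hopf-Cole change of variables \eqref{defphiphihat} directly into the two nonlinear PDEs \eqref{HJBSBP} and \eqref{FPKSBP} and verify that each collapses to a linear reaction-diffusion equation. Two of the claimed recovery identities come essentially for free: multiplying \eqref{phi_epsilon} by \eqref{phi_hat_epsilon} gives $\widehat{\varphi}_{\varepsilon}\varphi_{\varepsilon} = \rho^{\mathrm{opt}}_\varepsilon$, which is \eqref{OptimallyControlledJointPDF}; and inverting \eqref{phi_epsilon} as $\psi = 2\varepsilon\log\varphi_{\varepsilon}$ and feeding it into the optimal-control formula \eqref{inputSBP} yields $\bm{v}^{\rm{opt}}_\varepsilon = \nabla_{\bm{r}}\psi = 2\varepsilon\nabla_{\bm{r}}\log\varphi_{\varepsilon}$, which is \eqref{OptimalVelocity}. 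So the substantive work is deriving the two factor PDEs \eqref{FactorPDEForward}--\eqref{FactorPDEBackward}.

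First I would handle the HJB equation \eqref{HJBSBP}. Writing $\psi = 2\varepsilon\log\varphi_{\varepsilon}$, I compute $\partial_t\psi = 2\varepsilon\,\partial_t\varphi_{\varepsilon}/\varphi_{\varepsilon}$, $\nabla_{\bm{r}}\psi = 2\varepsilon\,\nabla_{\bm{r}}\varphi_{\varepsilon}/\varphi_{\varepsilon}$, and, via the quotient rule, $\Delta_{\bm{r}}\psi = 2\varepsilon(\Delta_{\bm{r}}\varphi_{\varepsilon}/\varphi_{\varepsilon} - |\nabla_{\bm{r}}\varphi_{\varepsilon}|^2/\varphi_{\varepsilon}^2)$. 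Substituting these into \eqref{HJBSBP}, the quadratic term $\tfrac12|\nabla_{\bm{r}}\psi|^2 = 2\varepsilon^2|\nabla_{\bm{r}}\varphi_{\varepsilon}|^2/\varphi_{\varepsilon}^2$ exactly cancels the $-2\varepsilon^2|\nabla_{\bm{r}}\varphi_{\varepsilon}|^2/\varphi_{\varepsilon}^2$ arising from $\varepsilon\Delta_{\bm{r}}\psi$, leaving $2\varepsilon\,\partial_t\varphi_{\varepsilon}/\varphi_{\varepsilon} + 2\varepsilon^2\Delta_{\bm{r}}\varphi_{\varepsilon}/\varphi_{\varepsilon} = -V$. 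Multiplying through by $\varphi_{\varepsilon}/(2\varepsilon)$ gives exactly \eqref{FactorPDEBackward}. This cancellation of the gradient-squared nonlinearity is the entire point of the Hopf-Cole substitution.

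Next I would treat the FPK equation \eqref{FPKSBP} with $\rho^{\rm{opt}}_\varepsilon = \widehat{\varphi}_{\varepsilon}\varphi_{\varepsilon}$ and $\nabla_{\bm{r}}\psi = 2\varepsilon\,\nabla_{\bm{r}}\varphi_{\varepsilon}/\varphi_{\varepsilon}$. Here the drift term simplifies as $\nabla_{\bm{r}}\cdot(\rho^{\rm{opt}}_\varepsilon\nabla_{\bm{r}}\psi) = 2\varepsilon\,\nabla_{\bm{r}}\cdot(\widehat{\varphi}_{\varepsilon}\nabla_{\bm{r}}\varphi_{\varepsilon})$, the diffusion term $\varepsilon\Delta_{\bm{r}}(\widehat{\varphi}_{\varepsilon}\varphi_{\varepsilon})$ expands by the product rule for the Laplacian, and the time derivative splits as $\partial_t(\widehat{\varphi}_{\varepsilon}\varphi_{\varepsilon}) = \varphi_{\varepsilon}\,\partial_t\widehat{\varphi}_{\varepsilon} + \widehat{\varphi}_{\varepsilon}\,\partial_t\varphi_{\varepsilon}$. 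The key move is to substitute the already-derived backward equation \eqref{FactorPDEBackward} for $\partial_t\varphi_{\varepsilon}$. I expect the main obstacle to be the bookkeeping in this step: the cross terms $2\varepsilon\,\nabla_{\bm{r}}\widehat{\varphi}_{\varepsilon}\cdot\nabla_{\bm{r}}\varphi_{\varepsilon}$ and the mixed Laplacian terms $\widehat{\varphi}_{\varepsilon}\Delta_{\bm{r}}\varphi_{\varepsilon}$ appear on both sides and must cancel identically, after which the two potential contributions recombine, leaving $\partial_t\widehat{\varphi}_{\varepsilon} = \varepsilon\Delta_{\bm{r}}\widehat{\varphi}_{\varepsilon} + \tfrac{1}{2\varepsilon}V\widehat{\varphi}_{\varepsilon}$, i.e. \eqref{FactorPDEForward}, once I divide through by $\varphi_{\varepsilon}$.

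Finally, the division by $\varphi_{\varepsilon}$ is legitimate because $\varphi_{\varepsilon} = \exp(\psi/2\varepsilon) > 0$ everywhere, and the coupled boundary conditions \eqref{factorBC} follow at once from $\rho^{\rm{opt}}_\varepsilon = \widehat{\varphi}_{\varepsilon}\varphi_{\varepsilon}$ together with the endpoint data \eqref{BoundaryConditionsForProp1}. The regularity $\psi\in C^{1,2}$ supplied by Proposition \ref{PropCondOptimalityLSBP} guarantees that all derivatives computed above exist classically, so every substitution and integration-by-parts-free manipulation is justified.
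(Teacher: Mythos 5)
Your proposal is correct and follows essentially the same route as the paper's own proof: invert the Hopf--Cole map to get $\psi = 2\varepsilon\log\varphi_{\varepsilon}$, substitute into the HJB equation \eqref{HJBSBP} where the gradient-squared nonlinearity cancels against the Laplacian term to yield \eqref{FactorPDEBackward}, then substitute $\rho^{\rm{opt}}_{\varepsilon}=\widehat{\varphi}_{\varepsilon}\varphi_{\varepsilon}$ into the FPK equation \eqref{FPKSBP} and use the already-derived backward PDE to eliminate $\partial_t\varphi_{\varepsilon}$, obtaining \eqref{FactorPDEForward} after dividing by the everywhere-positive $\varphi_{\varepsilon}$; the recovery formulas and boundary conditions follow immediately, exactly as in the paper.
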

\begin{proof}
By definition, $\psi_{\varepsilon}\in C^{1,2}\left([t_0,t_1],\mathbb{R}^3\right)$. Being continuous over its entire domain, $\psi_{\varepsilon}$ is bounded. So the mapping \eqref{defphiphihat} is bijective. In particular, both $\widehat{\varphi}_{\varepsilon},\varphi_{\varepsilon}$ are positive over the support of $\rho_{\varepsilon}^{\rm{opt}}$ for all $t\in[t_0,t_1]$.

From \eqref{phi_epsilon}, \begin{align}
   \psi_{\varepsilon} = 2\varepsilon \log\varphi_\varepsilon.
   \label{psilogphi}
\end{align}
From \eqref{phi_hat_epsilon}, $
    \widehat{\varphi}_\varepsilon = \rho^{\mathrm{opt}}_\varepsilon \exp(-\log(\varphi_\varepsilon))$,
which immediately gives \eqref{OptimallyControlledJointPDF}. Substituting \eqref{psilogphi} in \eqref{inputSBP} gives \eqref{OptimalVelocity}. Since \eqref{OptimallyControlledJointPDF} holds for all $t\in[t_0,t_1]$, so combining \eqref{BoundaryConditionsForProp1} and \eqref{OptimallyControlledJointPDF} yields \eqref{factorBC}. All that remains is to derive \eqref{FactorPDEs}.

Substituting \eqref{psilogphi} in \eqref{HJBSBP}, we get
\begin{align}
2 \varepsilon^2 \bigg\vert \frac{\nabla_{\bm{r}}  \varphi_\varepsilon}{\varphi_\varepsilon} \bigg\vert^{2} + 2\varepsilon \frac{1}{\varphi_\varepsilon} \dfrac{\partial \varphi_\varepsilon}{\partial t} +  \varepsilon\Delta_{\bm{r}}(2\varepsilon \log\varphi_\varepsilon) = - V(\bm{r}).
\label{SubInHJB}
\end{align}
Since $\Delta_{\bm{r}}(2\varepsilon \log\varphi_\varepsilon) = 2\varepsilon \nabla_{\bm{r}} \nabla_{\bm{r}}\log\varphi_\varepsilon = 2\varepsilon \nabla_{\bm{r}} \left( \frac{\nabla_{\bm{r}}\varphi}{\varphi_\varepsilon} \right)= 2\varepsilon \left( - \frac{\langle \nabla_{\bm{r}}\varphi_\varepsilon, \nabla_{\bm{r}}\varphi_\varepsilon\rangle}{\varphi_\varepsilon^2} + \frac{\Delta_{\bm{r}} \varphi_\varepsilon}{\varphi_\varepsilon}  \right)$, \eqref{SubInHJB} simplifies to
\begin{align}
    &2 \varepsilon^2 \bigg\vert \frac{\nabla_{\bm{r}}  \varphi_\varepsilon}{\varphi_\varepsilon} \bigg\vert^{2} \!\!+ 2\varepsilon \frac{1}{\varphi_\varepsilon} \dfrac{\partial \varphi_\varepsilon}{\partial t} +   2\varepsilon^2 \left( - \frac{\langle\nabla_{\bm{r}}\varphi_\varepsilon, \nabla_{\bm{r}}\varphi_\varepsilon\rangle}{\varphi_\varepsilon^2} + \frac{\Delta_{\bm{r}} \varphi_\varepsilon}{\varphi_\varepsilon} \right) = - V(\bm{r}),\nonumber\\
\Leftrightarrow\quad
     &2\varepsilon \frac{1}{\varphi_\varepsilon} \dfrac{\partial \varphi_\varepsilon}{\partial t} +   2\varepsilon^2 \left( \frac{1}{\varphi_\varepsilon} \Delta_{\bm{r}} \varphi \right) = - V(\bm{r}).
     \label{phiPDEsimplified}
\end{align}
Rearranging \eqref{phiPDEsimplified}, we obtain \eqref{FactorPDEBackward}.

Next, substituting \eqref{OptimallyControlledJointPDF} and \eqref{psilogphi} in \eqref{FPKSBP}, we find
\begin{align}
    &\dfrac{\partial}{\partial t} \left( \widehat{\varphi}_\varepsilon \varphi \right) + \nabla_{\bm{r}}\cdot \left( \widehat{\varphi}_\varepsilon \varphi_\varepsilon \nabla_{\bm{r}}(2\varepsilon \log\varphi_\varepsilon) \right)=\varepsilon\Delta_{\bm{r}} \left( \widehat{\varphi}_\varepsilon \varphi_\varepsilon \right), \nonumber\\
    \Leftrightarrow\quad &\varphi_\varepsilon \dfrac{\partial \widehat{\varphi}_\varepsilon}{\partial t} + \widehat{\varphi}_\varepsilon \dfrac{\partial \varphi_\varepsilon}{\partial t} + \nabla_{\bm{r}}\cdot \left( 2\varepsilon \widehat{\varphi}_\varepsilon \varphi_\varepsilon \frac{\nabla_{\bm{r}}\varphi_\varepsilon}{\varphi_\varepsilon} \right)= \varepsilon \left( \varphi_\varepsilon \Delta_{\bm{r}} \widehat{\varphi}_\varepsilon + 2 \langle\nabla_{\bm{r}} \widehat{\varphi}_\varepsilon, \nabla_{\bm{r}}\varphi_\varepsilon\rangle + \widehat{\varphi}_\varepsilon \Delta_{\bm{r}} \varphi_\varepsilon  \right),
    \label{phihatPDEintermed}
\end{align}
where the right-hand-side of \eqref{phihatPDEintermed} used the identity $\Delta (fg) = f \Delta g + 2 \langle\nabla f,\nabla g\rangle + g \Delta f$ for twice differentiable $f,g$. 

We expand the term $\nabla_{\bm{r}}\cdot \left( 2\varepsilon \widehat{\varphi}_\varepsilon \varphi_\varepsilon \frac{1}{\varphi_\varepsilon} \nabla_{\bm{r}}\varphi_\varepsilon \right)$ appearing in the left-hand-side of \eqref{phihatPDEintermed} as
$$\nabla_{\bm{r}}\cdot \left( 2\varepsilon \widehat{\varphi}_\varepsilon \varphi_\varepsilon \frac{1}{\varphi_\varepsilon} \nabla_{\bm{r}}\varphi_\varepsilon \right) = 2\varepsilon \langle\nabla_{\bm{r}} \widehat{\varphi}_\varepsilon, \nabla_{\bm{r}}\varphi_\varepsilon\rangle + 2\varepsilon \widehat{\varphi}_\varepsilon \Delta_{\bm{r}}\varphi_\varepsilon,$$
and therefore, \eqref{phihatPDEintermed} simplifies to
\begin{align}
    &\varphi_\varepsilon \dfrac{\partial \widehat{\varphi}_\varepsilon}{\partial t} + \widehat{\varphi}_\varepsilon \dfrac{\partial \varphi_\varepsilon}{\partial t} +  \varepsilon \widehat{\varphi}_\varepsilon \Delta_{\bm{r}}\varphi_\varepsilon =\varepsilon \varphi_\varepsilon \Delta_{\bm{r}} \widehat{\varphi}_\varepsilon,\nonumber\\
    \Leftrightarrow\quad &\varphi_\varepsilon \dfrac{\partial \widehat{\varphi}_\varepsilon}{\partial t} + \widehat{\varphi}_\varepsilon\left(\dfrac{\partial \varphi_\varepsilon}{\partial t} + \varepsilon\Delta_{\bm{r}}\varphi_\varepsilon\right) = \varepsilon \varphi_\varepsilon \Delta_{\bm{r}} \widehat{\varphi}_\varepsilon,\nonumber\\
    \Leftrightarrow\quad &\varphi_\varepsilon \dfrac{\partial \widehat{\varphi}_\varepsilon}{\partial t} + \widehat{\varphi}_\varepsilon\left(-\frac{1}{2\varepsilon}V(\bm{r})\varphi_\varepsilon\right) = \varepsilon \varphi_\varepsilon \Delta_{\bm{r}} \widehat{\varphi}_\varepsilon,
\label{phihatPDEfinal}    
\end{align}
where the last line's left-hand-side used the already derived PDE \eqref{FactorPDEBackward}. Rearranging  \eqref{phihatPDEfinal}, we arrive at \eqref{FactorPDEForward}. This completes the proof.
\end{proof}

\begin{remark}
The derived \eqref{FactorPDEs}-\eqref{factorBC} is a boundary coupled system of linear reaction-diffusion PDEs. Specifically, \eqref{FactorPDEForward} (resp. \eqref{FactorPDEBackward}) is a forward-in-time (resp. backward-in-time) reaction-diffusion PDE with state-dependent reaction rate $V(\bm{r})$. Once this PDE system for the Schr\"{o}dinger factors is solved, the optimally controlled joint PDF and the optimal control can then be computed using \eqref{RecoverOptimalSolution}. In \cref{sec:Algorithm}, we will outline how to solve \eqref{FactorPDEs}-\eqref{factorBC}. Notice that the special case $V\equiv 0$ corresponding to the classical SBP in \cref{subsec:SBP}, reduces \eqref{FactorPDEs} to a system of forward-backward heat PDEs. We note that a special case of \eqref{FactorPDEs}-\eqref{factorBC} with Gaussian $\rho_0,\rho_1$ and convex quadratic $V$ was studied in \cite{7170905}. The case of generic $\rho_0,\rho_1$ with convex quadratic $V$ appeared recently in \cite{teter2024schr,teter2024weyl}.
\end{remark}

\begin{remark}
The derived system \eqref{FactorPDEs}-\eqref{factorBC} has the following probabilistic interpretation. This is a system of forward-backward diffusions where individual points in the state space $\mathbb{R}^3$ move forward and reverse in time according to Brownian motion with variance $2\varepsilon$, together with killing or creation of probability mass at rate $V(\bm{r})$. Thus, $V(\bm{r})$ in \eqref{FactorPDEForward} is position-dependent killing rate (recall $V$ is negative). Likewise, $V(\bm{r})$ in \eqref{FactorPDEBackward} is position-dependent creation rate. The boundary conditions \eqref{factorBC} ensure that the total probability mass remains balanced. For gravitational potentials such as \eqref{defPotential}, the killing and creation rates are small (resp. large) when $|\bm{r}|$ is large (resp. small).
\end{remark}

%%%%%%%%%%%%%%%%%%%%%%%%%%%%%%%%%%%%%%%

\section{Algorithm}\label{sec:Algorithm}
We start by outlining our overall approach (\cref{subsec:OverallApproach}) to numerically solve the system \eqref{FactorPDEs}-\eqref{factorBC}. We then provide some details (\cref{subsec:IntegralRepresentation}-\cref{subsec:Riemann}) needed to carry out that overall approach.

\subsection{Overall Approach}\label{subsec:OverallApproach}
Our high level idea is to solve \eqref{FactorPDEs}-\eqref{factorBC} via a recursive algorithm. Specifically, suppose that we have access to initial value problem (IVP) solvers for the PDEs \eqref{FactorPDEs}. However, the endpoint Schr\"{o}dinger factors 
\begin{align}
\widehat{\varphi}_{\varepsilon,0}(\cdot) := \widehat{\varphi}_{\varepsilon}(\cdot,t=t_0), \quad \varphi_{\varepsilon,1}(\cdot) := \varphi_{\varepsilon}(\cdot,t=t_1),
\label{defEndpointFactors}
\end{align}
in \eqref{factorBC} are not known. The proposed recursive algorithm starts with making an initial (everywhere positive) guess for the endpoint factor $\widehat{\varphi}_{\varepsilon,0}$, and with this initial guess, we integrate \eqref{FactorPDEForward} forward in time using the IVP solver to predict $\widehat{\varphi}_{\varepsilon}(\cdot,t=t_1)$. Then, applying the known boundary condition \eqref{factorBC} at $t=t_1$ generates a guess for $\varphi_{\varepsilon,1}(\cdot)$, which we use to integrate \eqref{FactorPDEBackward} backward in time to predict $\varphi_{\varepsilon}(\cdot,t=t_0)$. Subsequently, applying the known boundary condition \eqref{factorBC} at $t=t_0$ generates a new guess for $\varphi_{\varepsilon,0}(\cdot)$, thereby completing one pass of the proposed recursion. This process is then repeated as shown in Fig. \ref{fig:FixedPointRecursion}.

\begin{figure}[hbt!]
\centering
\includegraphics[width=0.7\textwidth]{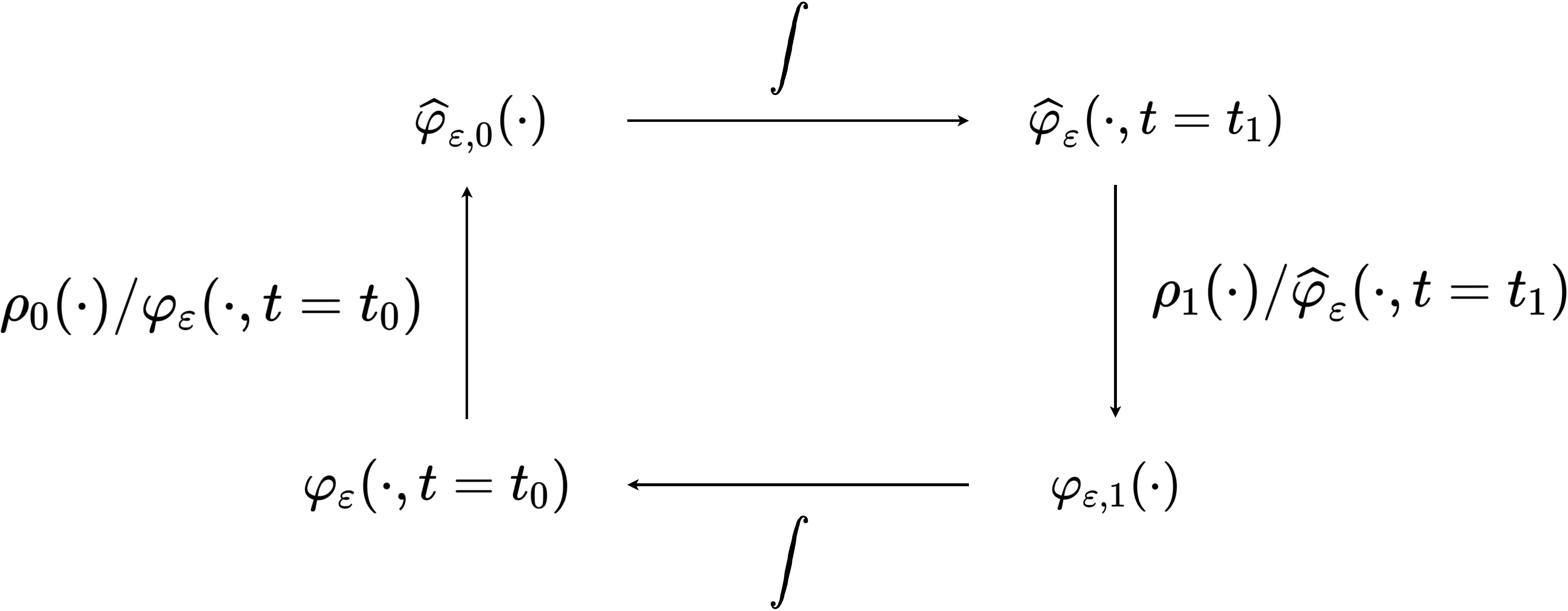}
\caption{Fixed point recursion over the endpoint Schr\"{o}dinger factor pair $(\widehat{\varphi}_{\varepsilon,0},\varphi_{\varepsilon,1})$ to solve the boundary-coupled system of PDEs \eqref{FactorPDEs}-\eqref{factorBC}. The arrows with integrals denote marching the respective IVP solution over time.}
\label{fig:FixedPointRecursion}
\end{figure}

Notice that in practical numerical simulations, $\rho_0,\rho_1$ are compactly supported, which is indeed a sufficient condition for our standing assumption $\rho_0,\rho_1\in\mathcal{P}_2\left(\mathbb{R}^3\right)$. That the above-mentioned recursion for compactly supported endpoint data, has \emph{guaranteed linear convergence} w.r.t. the Hilbert's projective metric \cite{hilbert1895gerade,bushell1973hilbert,franklin1989scaling}, has been proved in various degrees of generalities in the literature, see e.g., \cite[Sec. III]{chen2016entropic}. For references related to this contractive fixed point recursion, see e.g., \cite{de2021diffusion,pavon2021data,caluya2021reflected,caluya2021wasserstein,10293168}. 

We note that a convergent solution for the Schr\"{o}dinger factor recursion determines these factors in a projectivized unique sense, i.e., the recursion finds $\left(\kappa\widehat{\varphi}_{\varepsilon,0}(\cdot), \frac{1}{\kappa}\varphi_{\varepsilon,1}(\cdot)\right)$, and thus $\left(\kappa\widehat{\varphi}_{\varepsilon}(\cdot,t),\frac{1}{\kappa}\varphi_{\varepsilon}(\cdot,t)\right)$ for $t\in[t_0,t_1]$, up to an arbitrary constant $\kappa>0$. The product of these factors is the optimally controlled joint PDF $\rho^{\text{opt}}_{\varepsilon}(\cdot,t)$, which is unique in the usual sense. 

For the unknown function pair \eqref{defEndpointFactors}, the recursion proposed above effectively solves a \emph{Schr\"{o}dinger system}, i.e., a system of nonlinear integral equations
\begin{subequations}
\begin{align}
&\rho_0(\boldsymbol{r})=\widehat{\varphi}_{\varepsilon,0}(\boldsymbol{r}) \int_{\mathbb{R}^3} q_{\varepsilon}(t_0, \boldsymbol{r}, t_1, \boldsymbol{y}) \varphi_{\varepsilon,1}(\boldsymbol{y}) \mathrm{d} \boldsymbol{y}, \\
&\rho_1(\boldsymbol{r})=\varphi_{\varepsilon,1}(\boldsymbol{r}) \int_{\mathbb{R}^3} q_{\varepsilon}(t_0, \boldsymbol{y}, t_1, \boldsymbol{r}) \widehat{\varphi}_{\varepsilon,0}(\boldsymbol{y}) \mathrm{d} \boldsymbol{y},
\end{align}
\label{SchrodingerSystem}
\end{subequations}
where $q_{\varepsilon}$ is the (uncontrolled) Markov kernel associated with \eqref{FactorPDEs}. In our setting, evaluation of the integrals in \eqref{SchrodingerSystem} are performed by the IVP solvers for \eqref{FactorPDEs}. For the readers' convenience, we summarize the steps of our proposed algorithm. 

\medskip

    \textbf{Step 1.}  Make an initial guess for $\widehat{\varphi}_{\varepsilon,0}$ that is everywhere positive.

    \medskip
    
    \textbf{Step 2.} Use the $\widehat{\varphi}_{\varepsilon,0}$ from \textbf{Step 1} to integrate \eqref{FactorPDEForward} from $t = t_0$ to $t = t_1$, to determine $\widehat{\varphi}_{\varepsilon}(\cdot,t = t_1)$.

    \medskip
    
    \textbf{Step 3.} Set $\varphi_{\varepsilon,1} = \rho_1(\cdot) / \widehat{\varphi}_{\varepsilon}(\cdot,t = t_1)$ by enforcing \eqref{factorBC} at $t=t_1$.

    \medskip
    
    \textbf{Step 4.} Use the $\varphi_{\varepsilon,1}$ from \textbf{Step 3} to integrate \eqref{FactorPDEBackward} from $t = t_1$ to $t = t_0$ to determine $\varphi_{\varepsilon}(\cdot,t = t_0)$.

    \medskip
    
    \textbf{Step 5.} Redefine $\widehat{\varphi}_{\varepsilon,0} = \rho_0 / \varphi_{\varepsilon}(\cdot,t = t_0)$ by enforcing \eqref{factorBC} at $t=t_0$.

    \medskip
    
    \textbf{Step 6.} Repeat \textbf{Steps 2-5} until the pair $(\widehat{\varphi}_{\varepsilon,0}(\cdot),\varphi_{\varepsilon,1}(\cdot))$ has converged up to a desired numerical tolerance.

    \medskip

\textbf{Step 7.} With the converged endpoint factors $\widehat{\varphi}_{\varepsilon,0}(\cdot), \varphi_{\varepsilon,1}(\cdot)$ from \textbf{Step 6}, compute the transient factors
$\widehat{\varphi}_{\varepsilon}(\cdot,t), \varphi_{\varepsilon}(\cdot,t)$ at desired $t\in[t_0,t_1]$ using the IVP solver for \eqref{FactorPDEForward} and \eqref{FactorPDEBackward}. 

\medskip

\textbf{Step 8.} Use
\eqref{RecoverOptimalSolution} to return the optimal solution $\left(\rho_{\varepsilon}^{\rm{opt}},\bm{v}_{\varepsilon}^{\rm{opt}}\right)$ for the L-SBP.

\medskip
In describing our overall approach, so far we presumed the availability of an IVP solver for \eqref{FactorPDEs}. We next detail an integral representation formula for the solution of the IVP for \eqref{FactorPDEs}. This in turn facilitates the implementation of such an IVP solver.

%%%%%%%%%%%%%%%%%%%%%%%%%%%%%%%%%%%%
\subsection{Integral representation for the Schr\"{o}dinger factors}\label{subsec:IntegralRepresentation}
We need the following Lemma \ref{LemmaReactionDiffusionPDESolRn}. Its proof uses basic Fourier transform results collected in Appendix A. The proof is deferred to Appendix B.

\begin{lemma}[Fredholm Integral representation for the solution of linear reaction-diffusion PDE IVP with state-dependent reaction rate]\label{LemmaReactionDiffusionPDESolRn}For $t\in[t_0,\infty)$, the solution of the reaction-diffusion PDE IVP
\begin{align}
\dfrac{\partial u}{\partial t} = a\Delta_{\bm{x}} u + b(\bm{x}) u, \quad \bm{x}\in\mathbb{R}^{d}, \quad u(\bm{x},t=t_0)=u_0(\bm{x})\;\text{given},
\label{PDEIVP}
\end{align}
where the constant $a>0$ and the function $b(\bm{x})$ is sufficiently smooth a.e., satisfies
\begin{align}
u(\bm{x},t) = &\underbrace{\dfrac{1}{\sqrt{\left(4\pi a t \right)^{d}}}\displaystyle\int_{\mathbb{R}^{d}}\exp\left(-\dfrac{\vert\bm{x}-\bm{y}\vert^2}{4at}\right) u_0(\bm{y})\differential\bm{y}}_{\text{term 1}} \nonumber\\
&+ \underbrace{\displaystyle\int_{t_0}^{t} \dfrac{1}{\sqrt{\left(4\pi a (t-\tau) \right)^{d}}}\displaystyle\int_{\mathbb{R}^{d}}\exp\left(-\dfrac{\vert\bm{x}-\bm{y}\vert^2}{4a(t-\tau)}\right)b(\bm{y})u(\bm{y},\tau)\:\differential\bm{y}\:\differential\tau}_{\text{term 2}}.
\label{IntegralRepresententationFormula}
\end{align}
\end{lemma}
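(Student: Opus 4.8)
The plan is to recognize \eqref{PDEIVP} as an inhomogeneous heat equation and invoke Duhamel's principle (variation of parameters), then justify the resulting formula rigorously through the spatial Fourier transform, leveraging the transform identities of Appendix A. Concretely, I would freeze the reaction term by setting $f(\bm{x},t) := b(\bm{x})\,u(\bm{x},t)$ and regard $u$ as solving $\partial_t u = a\Delta_{\bm{x}} u + f$ with $u(\cdot,t_0)=u_0$. The representation \eqref{IntegralRepresententationFormula} is then precisely the mild/Duhamel formula for this inhomogeneous heat equation, with the essential caveat that $f$ itself contains the unknown $u$; hence term 2 is genuinely implicit and the identity is a Volterra integral equation of the second kind rather than a closed form. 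This is exactly the self-referential structure that the IVP solver in Sec.~\ref{sec:Algorithm} is designed to iterate.

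First I would apply the spatial Fourier transform $\mathcal{F}$ to both sides of \eqref{PDEIVP}. Using the Appendix A identity $\mathcal{F}[\Delta_{\bm{x}}u](\bm{k},t) = -\vert\bm{k}\vert^2\,\widehat{u}(\bm{k},t)$ (with the normalization fixed there), the PDE decouples into a family of linear first-order ODEs in $t$, one per frequency $\bm{k}$:
\[
\frac{\partial \widehat{u}}{\partial t}(\bm{k},t) = -a\vert\bm{k}\vert^2\,\widehat{u}(\bm{k},t) + \widehat{f}(\bm{k},t), \qquad \widehat{u}(\bm{k},t_0)=\widehat{u_0}(\bm{k}).
\]
Multiplying by the integrating factor $\exp\!\left(a\vert\bm{k}\vert^2 t\right)$ and integrating from $t_0$ to $t$ gives
\[
\widehat{u}(\bm{k},t) = e^{-a\vert\bm{k}\vert^2 (t-t_0)}\,\widehat{u_0}(\bm{k}) + \int_{t_0}^{t} e^{-a\vert\bm{k}\vert^2 (t-\tau)}\,\widehat{f}(\bm{k},\tau)\,\differential\tau.
\]

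Next I would invert the transform term by term. The key ingredient, again from Appendix A, is that the inverse transform of the Gaussian multiplier is the heat kernel, $\mathcal{F}^{-1}\!\left[e^{-a\vert\bm{k}\vert^2 s}\right](\bm{x}) = (4\pi a s)^{-d/2}\exp\!\left(-\vert\bm{x}\vert^2/(4as)\right)$ for $s>0$. Applying the convolution theorem to the first term (a product in frequency becomes a spatial convolution) reproduces term 1 with $s = t-t_0$. For the second term I would interchange the $\tau$-integral with the inverse transform, apply the convolution theorem at each fixed $\tau$ with $s = t-\tau$, and finally substitute $\widehat{f}(\bm{k},\tau)=\mathcal{F}[b\,u](\bm{k},\tau)$ so that the spatial convolution acts on $b(\bm{y})u(\bm{y},\tau)$; this yields term 2 verbatim.

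The main obstacle is the analytical justification of these formal steps rather than the algebra. Two interchanges must be controlled: swapping the inverse Fourier transform with the time integral in the Duhamel term (a Fubini argument on $[t_0,t]\times\mathbb{R}^d$), and the convolution-theorem step, both of which require sufficient integrability/decay of $\widehat{u}$, $\widehat{u_0}$ and $\widehat{f}=\mathcal{F}[b\,u]$. This is exactly where the hypothesis that $b$ is bounded and sufficiently smooth a.e. enters: it ensures $b\,u$ possesses a well-defined transform and that $\widehat{f}(\bm{k},\cdot)$ is integrable in $\tau$ uniformly enough for Fubini. I would also remark that positivity of $s=t-\tau$ on the open interval keeps the Gaussian kernel integrable, and the endpoint $\tau\uparrow t$ is harmless because the heat kernel converges weakly to a Dirac delta there, so term 2 carries no singularity. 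The payoff is that \eqref{IntegralRepresententationFormula} holds as an identity satisfied by the true solution, ready to be used as a fixed-point recursion.
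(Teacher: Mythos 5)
Your proof is correct, and it shares the paper's overall skeleton -- spatial Fourier transform, exact solution of the resulting time evolution at each frequency, then inversion via the convolution theorem and the Gaussian heat-kernel identity -- but the middle step is carried out by a genuinely different (and more elementary) device. The paper takes a Laplace transform in time of the Fourier-transformed equation, solves the resulting algebraic relation for $\widehat{U}(\bm{\omega},s)$, and recovers the Duhamel structure through the Laplace convolution property $\mathcal{L}^{-1}\left[F(s)G(s)\right]=(f*g)(t)$; it also expands $\mathcal{F}\left[b\,u\right]$ as the frequency-domain convolution $\frac{1}{(2\pi)^{d}}\left(\widehat{b}*\widehat{u}\right)$ at the outset and must undo that expansion at the very end via $\mathcal{F}^{-1}\left[\int_{\mathbb{R}^{d}}\widehat{b}(\bm{\omega}-\bm{\eta})\widehat{u}(\bm{\eta},\tau)\differential\bm{\eta}\right]=(2\pi)^{d}b(\bm{x})u(\bm{x},\tau)$. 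You instead solve the frequency-space ODE directly with an integrating factor and keep the reaction term opaque as $\widehat{f}=\mathcal{F}\left[b\,u\right]$, substituting it only after inversion. Your route avoids the Laplace transform and its convolution theorem entirely, eliminates the $(2\pi)^{d}$ bookkeeping, and treats a general initial time cleanly: your term 1 correctly carries $t-t_{0}$, whereas the paper's Laplace step tacitly assumes $t_{0}=0$, which is why \eqref{IntegralRepresententationFormula} displays $t$ rather than $t-t_{0}$ in term 1 (harmless after relabeling time, but your version is the tighter statement). What the paper's route buys is uniformity with its Appendix A toolkit -- both independent variables are dispatched by transform-then-algebra. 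Two small remarks: you call the identity a Volterra integral equation of the second kind, which is the more precise name given the variable upper limit $t$ (the paper's Remark \ref{TwoSummandsRemark} says Fredholm); and the lemma hypothesizes only that $b$ is sufficiently smooth a.e., not bounded -- boundedness is a property of the potential $V$ in the application, so your Fubini justification should be phrased as an integrability assumption on $b\,u$ rather than attributed to the lemma's hypotheses.
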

\begin{remark}\label{TwoSummandsRemark}
The right-hand-side of \eqref{IntegralRepresententationFormula} is a sum of two terms: term 1 is the solution of the heat PDE IVP accounting for the initial condition $u_0(\cdot)$; term 2 captures the effect of the reaction rate $b(\cdot)$. We can view \eqref{IntegralRepresententationFormula} as a Fredholm integral equation of second kind \cite[Ch. 2]{tricomi1985integral} over the joint $(\bm{x},t)$ coordinates.
\end{remark}
Specializing \eqref{IntegralRepresententationFormula} with $n=3$ for the IVP involving \eqref{FactorPDEForward}, we get an integral representation formula
\begin{align}
\label{eq:int_u}
\widehat{\varphi}_{\varepsilon}(\bm{r},t) =  &\frac{1}{\sqrt{(4 \pi \varepsilon t)^3}} \int_{\mathbb{R}^3} \textrm{exp} \left( - \frac{\vert \bm{r} - \tilde{\bm{r}} \vert^2}{4 \varepsilon t} \right) \widehat{\varphi}_{\varepsilon,0}(\tilde{\bm{r}})  \:\differential\tilde{\bm{r}} \nonumber\\
&+ \int_{t_0}^t \frac{1}{2\varepsilon\sqrt{(4 \pi \varepsilon (t - \tau))^3}} \int_{\mathbb{R}^3} \textrm{exp} \left( - \frac{\vert \bm{r} - \tilde{\bm{r}} \vert^2}{4 \varepsilon(t-\tau)} \right) V(\tilde{\bm{r}}) \widehat{\varphi}_{\varepsilon} (\tilde{\bm{r}},\tau) \: \differential \tilde{\bm{r}} \: \differential \tau.
\end{align}
To complete \textbf{Step 2} of our algorithm, we approximate \eqref{eq:int_u} as further detailed in \cref{subsec:Riemann}.

Additionally, consider a change of time variable $t\mapsto \bar{t} := t_0 + t_1 - t$. When the physical time $t\in[t_1,t_0]$ flows backward, then the transformed time $\bar{t}\in[t_0,t_1]$ flows forward. This allows the IVP involving \eqref{FactorPDEBackward} to be rewritten as
\begin{align}
\frac{\partial \bar{\varphi}_{\varepsilon}}{\partial \bar{t}}\left(\bm{r},\bar{t}\right) = \left(\varepsilon\Delta_{\bm{r}} + \frac{1}{2\varepsilon}V(\bm{r})\right)\bar{\varphi}_{\varepsilon}(\bm{r},\bar{t}), \qquad \bar{\varphi}_{\varepsilon}\left(\bm{r},\bar{t} = t_0\right) = \varphi_{\varepsilon,1}.
\label{eq:new_pde}
\end{align}
From the solution of \eqref{eq:new_pde}, we recover $\varphi_{\varepsilon}(\bm{r},t)=\bar{\varphi}_{\varepsilon}(\bm{r},t_{0}+t_{1}-t)$. Thus, for our algorithm in \cref{subsec:OverallApproach}, we can reuse the same IVP solver from \textbf{Step 2} to complete \textbf{Step 4}.

%%%%%%%%%%%%%%%%%%%%%%%%%%%%%%%%%%%%
\subsection{Left Riemann sum approximation}\label{subsec:Riemann}
For the numerical implementation of the IVP solver mentioned before, we now explain how the right-hand-side of \eqref{eq:int_u}, specifically its second summand, can be approximated via left Riemann sum. Recall that (cf. Remark \ref{TwoSummandsRemark}) the first summand in the right-hand-side of \eqref{eq:int_u} is the well-known solution of the heat equation, denoted as $\widehat{\varphi}_{\text{heat}}(\bm{r},t)$, which can be implemented by direct matrix-vector multiplication over a computational domain. In particular, we uniformly discretize a hyper-rectangular computational domain $[x_{\min}, x_{\max}] \times [y_{\min}, y_{\max}] \times [z_{\min}, z_{\max}] \times [t_0, t_1] \subset \mathbb{R}^3 \times [t_0, t_1]$. Along the space-time dimensions, we use constant step sizes $\Delta x, \Delta y, \Delta z$, $\Delta t$ for $N_x$, $N_y$, $N_z$, $N_t$ steps, respectively.

We assign $\widehat{\varphi}_{\varepsilon}(\bm{r}, t) \leftarrow \widehat{\varphi}_{\varepsilon,0}$ for $t \in [t_0, t_0 + \Delta t)$, and approximate the second term in \eqref{eq:int_u} as 
\begin{align}
&\label{phihatleftfirst} \widehat{\varphi}_{\textrm{left}}(\bm{r}, t = t_0 + \Delta t) \approx \frac{\Delta x \Delta y \Delta z \Delta t}{2 \varepsilon \sqrt{(4 \pi \varepsilon (\Delta t))^3}}  \sum_{m=0}^{N_x} \sum_{n=0}^{N_y} \sum_{j=0}^{N_z} \frac{V(\widetilde{\bm{r}}_{(m, n, j)}) \widehat{\varphi}_{\varepsilon,0}( \widetilde{\bm{r}}_{(m, n, j)})}{\textrm{exp}\left( \frac{\vert \bm{r} - \widetilde{\bm{r}}_{(m, n, j)}\vert^2}{4\varepsilon \Delta t} \right)} ,
\end{align}
where $\widetilde{\bm{r}}_{(m, n, j)} := [x_{\min} + m \Delta x, y_{\min} + n \Delta y, z_{\min} + j\Delta z]^{\top}$ and the tuple $(m,n,j)\in \{0,1,\hdots,N_x\}\times\{0,1,\hdots,N_y\}\times\{0,1,\hdots,N_z\}$.
% We thus calculate $\widehat{\varphi}_{\textrm{left}}(\bm{r}, t = t_0 + \Delta t)$ by first finding the sum of the integrand's values at each point in our spatial grid at $t=t_0$. We then multiply by the volume $\Delta x \times \Delta y \times \Delta z$ and time-step $\Delta t$, as the value of the integrand at one point on the grid is used to approximate the integrand over this volume for $[t_0, t_0+\Delta t)$. 
We thus conclude that an appropriate approximation of $\widehat{\varphi}_{\varepsilon}(\bm{r}, t = t_0 + \Delta t)$ is
\begin{align*}
\widehat{\varphi}_{\varepsilon,\textrm{approx}}(\bm{r}, t = t_0 + \Delta t) = \widehat{\varphi}_{\textrm{heat}}(\bm{r}, t = t_0 + \Delta t) + \widehat{\varphi}_{\textrm{left}}(\bm{r}, t = t_0 + \Delta t).    
\end{align*}
% For $k \in \mathbb{N}$, let $t_k := t_0 + k \Delta t$. We construct a recursive formula for $\widehat{\varphi}_{\varepsilon,\textrm{approx}}(\bm{r}, t = t_k)$ using $\widehat{\varphi}_{\varepsilon,\textrm{approx}}(\bm{r}, t = t_q)$, $\forall \{q \in \mathbb{N} \mid q < k\}$. We approximate $\widehat{\varphi}_{\varepsilon}(\bm{r}, t)$ as $\widehat{\varphi}_{\textrm{approx}}(\bm{r}, t = t_q)$ for $t \in [t_q, t_{q+1})$. To evaluate the second term of \eqref{eq:int_u}, we integrate over each interval $[t_q, t_q + \Delta t)$ using triple summation, as in the calculation of $\widehat{\varphi}_{\textrm{left}}(\bm{r}, t = t_0 + \Delta t)$. The sum of the results from $k$ intervals from $t= t_0$ to $t = t_k$ yields 
% \begin{align}
% \widehat{\varphi}_{\textrm{left}}(\bm{r}, t_k) = \sum_{q = 0}^{k-1} \left(\frac{1}{2 \varepsilon \sqrt{(4 \pi \varepsilon (t_k - t_q))^3}}  \sum_{m=0}^{N_x} \left( \sum_{n=0}^{N_y} \left( \sum_{j=0}^{N_z}  \textrm{exp} \left( - \frac{\vert \bm{r} - \widetilde{\bm{r}}_{(m, n, j)} \vert^2}{4\varepsilon(t_k-t_q)} \right) V(\bar{\bm{r}}_{(m, n, j)}) \widehat{\varphi}_{\varepsilon,\textrm{approx}}(\widetilde{\bm{r}}_{(m, n, j)}, t_q) \right) \right) \right) \Delta x \Delta y \Delta z \Delta t.
% \label{left_approx}
% \end{align}
For $k\in\{1,\hdots,N_t\}$, we construct a recursive formula for $\widehat{\varphi}_{\varepsilon,\textrm{approx}}(\bm{r}, t = t_0 + k\Delta t)$ using $\widehat{\varphi}_{\varepsilon,\textrm{approx}}(\bm{r}, t = t_0 + q\Delta t)$ $\forall \{q \in \mathbb{N}\cup\{0\} \mid q < k\}$. We approximate $\widehat{\varphi}_{\varepsilon}(\bm{r}, t)\approx\widehat{\varphi}_{\varepsilon,\textrm{approx}}(\bm{r}, t = t_0 + q\Delta t)$ for $t \in [t_0+q\Delta t, t_{0}+(q+1)\Delta t)$. To evaluate the second term of \eqref{eq:int_u}, we integrate over each interval $[t_0 + q\Delta t, t_0 + (q+1)\Delta t)$ using triple summation, as in the calculation of $\widehat{\varphi}_{\textrm{left}}(\bm{r}, t = t_0 + \Delta t)$. This yields the recursive formula
%\begin{align}
%&\widehat{\varphi}_{\textrm{left}}(\bm{r}, t_0+k\Delta t) \nonumber \\ &= \sum_{q = 0}^{k-1} \left(\frac{\Delta x \Delta y \Delta z \Delta t}{2 \varepsilon \sqrt{(4 \pi \varepsilon (k-q)\Delta t)^3}}  \sum_{m=0}^{N_x} \left( \sum_{n=0}^{N_y} \left( \sum_{j=0}^{N_z}  \textrm{exp} \left( - \frac{\vert \bm{r} - \widetilde{\bm{r}}_{(m, n, j)} \vert^2}{4\varepsilon(k-q)\Delta t} \right) \right.\right.\right.\nonumber\\
%&\qquad\qquad\qquad\qquad\qquad\qquad\qquad\qquad\left.\left.\left. V(\widetilde{\bm{r}}_{(m, n, j)}) \widehat{\varphi}_{\varepsilon,\textrm{approx}}^{(m, n, j, q)} \right) \right) \right),
%\label{left_approx}
%\end{align}
%\begin{align}
%\sum_{q = 0}^{k-1} \left(\frac{\Delta x \Delta y \Delta z %\Delta t}{2 \varepsilon \sqrt{(4 \pi \varepsilon (k-%q)\Delta t)^3}} \sum_{m=0}^{N_x}  \sum_{n=0}^{N_y}  %\sum_{j=0}^{N_z}  \textrm{exp} \left( - \frac{\vert \bm{r} %- \widetilde{\bm{r}}_{(m, n, j)} \vert^2}{4\varepsilon(k-%q)\Delta t} \right) V(\widetilde{\bm{r}}_{(m, n, j)}) %\widehat{\varphi}_{\varepsilon,\textrm{approx}}^{(m, n, j, %q)} \right),
%\label{left_approx}
%\end{align}
\begin{align}
\widehat{\varphi}_{\textrm{left}}(\bm{r}, t_0+k\Delta t) = \sum_{q = 0}^{k-1} \frac{\Delta x \Delta y \Delta z \Delta t}{2 \varepsilon \sqrt{(4 \pi \varepsilon (k-q)\Delta t)^3}} \sum_{m=0}^{N_x}  \sum_{n=0}^{N_y}  \sum_{j=0}^{N_z}  \frac{V(\widetilde{\bm{r}}_{(m, n, j)}) \widehat{\varphi}_{\varepsilon,\textrm{approx}}^{(m, n, j, q)}}{\textrm{exp} \left( \frac{\vert \bm{r} - \widetilde{\bm{r}}_{(m, n, j)} \vert^2}{4\varepsilon(k-q)\Delta t} \right) },
\label{left_approx}
\end{align}
where $\widehat{\varphi}_{\varepsilon,\textrm{approx}}^{(m, n, j, q)} = \widehat{\varphi}_{\varepsilon,\textrm{approx}}(\widetilde{\bm{r}}_{(m, n, j)}, t_0 + q\Delta t)$. Notice that specializing \eqref{left_approx} for $k=1$ recovers \eqref{phihatleftfirst}, as expected.

As earlier, we set $\widehat{\varphi}_{\varepsilon,\textrm{approx}}(\bm{r}, t = t_0 + k\Delta t) = \widehat{\varphi}_{\textrm{heat}}(\bm{r}, t =t_0 + k\Delta t) + \widehat{\varphi}_{\textrm{left}}(\bm{r}, t = t_0 + k\Delta t)$. Applying this result, we complete \textbf{Step 2} of our algorithm in \cref{subsec:OverallApproach} using 
\begin{align*}    \widehat{\varphi}_{\varepsilon}(\bm{r}, t_1) \approx \widehat{\varphi}_{\varepsilon,\textrm{approx}}(\bm{r}, t_1) = \widehat{\varphi}_{\textrm{heat}}(\bm{r}, t_1) + \widehat{\varphi}_{\textrm{left}}(\bm{r}, t_1).
\end{align*}
Likewise, \textbf{Step 4} of our algorithm in \cref{subsec:OverallApproach} is completed via this left Riemann sum approximation in conjunction with the change of variable $\bar{t} = t_0 + t_1 - t$ discussed in \cref{subsec:IntegralRepresentation}.

\begin{remark}\label{Remark:FeynmanKac}
An alternative way to numerically solve the IVPs associated with \eqref{FactorPDEs} is to use the Feynman-Kac path integral \cite[Ch. 8.2]{oksendal2013stochastic}, \cite{yong1997relations}, \cite[Ch. 3.3]{yong1999stochastic}, which was recently used to solve a class of SBPs \cite{nodozi2022schr}. Specifically, the Feynman-Kac formula expresses the solution of \cref{FactorPDEBackward}  as an expectation:
\begin{align}
\varphi_{\varepsilon}(\bm{r},t) = \mathbb{E}\left[\varphi_{\varepsilon,1}\left(\bm{r}_1\right)\exp\left(\int_{t}^{t_{1}}V\left(\bm{r}_s\right)\differential s\right)\bigg\vert \bm{r}_t = \bm{r}\right].
\label{FeynmanKacBackwardReactionDiffusion}    
\end{align}
Note that this also requires a function approximation oracle.
\end{remark}

%%%%%%%%%%%%%%%%%%%%%%%%%%%%%%%%%%%%

\section{Numerical results}\label{sec:Numerical}
We illustrate our proposed algorithm for solving the L-SBP in a low Earth orbit transfer case study as in \cite{kim2020optimal} and \cite{curtis2013orbital}. Specifically, we consider stochastic transfer of a spacecraft from an initial mean position $[5000, 10000, 2100 ]^{\top}$ km to a final mean position $[-14600, 2500, 7000]^{\top}$ km in Earth centered inertial coordinates for a fixed flight time horizon $[t_0,t_1]=[0,1\:\text{hour}]$.

For numerical conditioning, we re-scale the variables as $\bm{r}^{\prime} := \bm{r}/R$, $t^{\prime} := t/T$, where the normalization constants $R = 6600$ km and $T = 5399$ s. In these re-scaled coordinates, the potential \eqref{defPotential} becomes
\begin{align}
    \overline{V}(\bm{r}^{\prime}) &= - \frac{\mu_{\textrm{new}}R}{T| \bm{r}^{\prime}|} - \frac{\mu_{\textrm{new}} J_2 R_{\textrm{Earth}}^2}{2 R T |\bm{r}^{\prime}|^3} \left( 1 - \frac{3 R^2 (z^{\prime})^2}{R^2|\bm{r}^{\prime}|^2} \right) \nonumber\\
    &= - \frac{\mu_{\textrm{new}}R}{T| \bm{r}^{\prime}|}\left( 1+\frac{ J_2 R_{\textrm{Earth}}^2}{2 R |\bm{r}^{\prime}|^2} \right) \left( 1 - \frac{3 (z^{\prime})^2}{|\bm{r}^{\prime}|^2} \right), \quad \mu_{\textrm{new}} := \frac{\mu T}{R^2}\:\text{km/s}.
\label{newPotential}
\end{align}

To ensure that all controlled state sample paths stay above the surface of Earth, we add an regularizer to the potential $\overline{V}$, and write the regularized version of \eqref{newPotential} as
\begin{align}    
\overline{V}_{\textrm{reg}}(\bm{r}^{\prime}) = - \frac{\mu_{\textrm{new}}R}{T\vert \bm{r}^{\prime}\vert}\!\left(\!1+\frac{ J_2 R_{\textrm{Earth}}^2}{2 R \vert\bm{r}^{\prime}\vert^2}\!\right)\!\! \left( 1 - \frac{3 (z^{\prime})^2}{\vert \bm{r}^{\prime}\vert^2} \right) \!+ \!\gamma\! \left( \vert \bm{r}^{\prime}\vert^2 - \frac{(R_{\rm{Earth}} +c)^2}{R^2}\right),
\label{newPotentialWithReg}
\end{align}
where the constant $c$ is chosen in our simulation such that $R_{\rm{Earth}} +c = 6560$ Km, and $\gamma>0$ is the regularization weight.

Using the pushforward of probability measures, the initial and final PDFs in the new coordinates become 
\begin{align}    \rho_{0}^{\prime}(\bm{r}^{\prime}) = R^3 \rho_0(R \bm{r}^{\prime}), \quad \rho_{1}^{\prime}(\bm{r}^{\prime}) = R^3 \rho_1(R \bm{r}^{\prime}).
    \label{eq:rho_var}
\end{align}
Letting $\widehat{\phi}(\bm{r}^{\prime}, t^{\prime}):= \widehat{\varphi}_{\varepsilon}(\bm{r}, t)$, and noting that 
$\frac{\partial \widehat{\varphi}_{\varepsilon}}{\partial t}(\bm{r}, t) = \frac{1}{T} \frac{\partial \widehat{\phi}}{\partial t^{\prime}}(\bm{r}^{\prime}, t^{\prime})$, $\Delta_{\bm{r}} \widehat{\varphi}_{\varepsilon} (\bm{r}, t) = \frac{1}{R^2} \Delta_{\bm{r}^{\prime}} \widehat{\phi} (\bm{r}^{\prime}, t^{\prime})$,
we rewrite \eqref{FactorPDEForward} as
\begin{align}
    \frac{\partial \widehat{\phi}}{\partial t^{\prime}}(\bm{r}^{\prime}, t^{\prime}) = \left( \frac{\varepsilon T}{R^2} \Delta_{\bm{r}^{\prime}} + \frac{T}{2\varepsilon} \overline{V}_{\textrm{reg}}(\bm{r}^{\prime}) \right) \widehat{\phi} (\bm{r}^{\prime}, t^{\prime}),
\label{FactorPDEForwardNewVariables}
\end{align}
%By defining $\varepsilon_{\textrm{new}} = \frac{\varepsilon T}{R^2}$ and $\bm{b} = T \bar{V}$ and re-expressing \eqref{FactorPDEForwardNewVariables} as
%\begin{align*}
   % \frac{\partial \hat{\phi}}{\partial t^{\prime}}(\bm{r}', t^{\prime}) = \left( \varepsilon_{\textrm{new}} \Delta_{\bm{r}'} + \bm{b}(\bm{r}') \right) \hat{\phi} (\bm{r}^{\prime}, t^{\prime}),
%\end{align*}
and apply the left Riemann sum approach from \cref{subsec:Riemann} to solve the corresponding PDE IVP. Likewise, we apply a change of variable $\phi(\bm{r}^{\prime}, t^{\prime}) := \varphi_{\varepsilon}(\bm{r}, t)$ to \eqref{FactorPDEBackward}.

We choose the endpoint PDFs as trivariate normal PDFs $\rho_0 = \mathcal{N}(\bm{\mu}_0, \bm{\Sigma}_0)$, $\rho_1 = \mathcal{N}(\bm{\mu}_1, \bm{\Sigma}_1)$ with respective mean vectors $\bm{\mu}_0 = [5000, 10000, 2100 ]^{\top},\bm{\mu}_1 = [-14600, \allowbreak 2500, 7000]^{\top}$ as in \cite{kim2020optimal,curtis2013orbital}, and respective covariance matrices $\bm{\Sigma}_0 = {\rm{diag}}\left(\bm{\mu}_0^2\right)/100, \allowbreak\bm{\Sigma}_1 = {\rm{diag}}\left(\bm{\mu}_1^2\right)/100$, where the vector exponentiation is element-wise. Using \eqref{eq:rho_var}, we implement the proposed algorithm in the re-scaled coordinates $(\bm{r}', t^{\prime})$. Note that while we use $\rho_0,\rho_1$ as Gaussians for this specific simulation, our method applies for any non-Gaussian PDFs with finite second moments.

We consider the spatial grid $\left[-\frac{30000}{R}, \frac{10000}{R}\right] \times \left[-\frac{5000}{R}, \frac{25000}{R}\right] \times \left[-\frac{5000}{R}, \frac{25000}{R}\right]$ and set $N_x = N_y = N_z = 12$, $N_t = 50$, $\varepsilon = 15000$, $\gamma = 7.5$. We perform the Schr\"{o}dinger factor recursion (\textbf{Step 6} in \cref{subsec:OverallApproach}) for 7 iterations which we found sufficient for convergence. The optimally controlled joint $\rho^{\prime {\rm{opt}}}_{\varepsilon}(\bm{r}^{\prime}, t^{\prime})$ computed from \textbf{Step 8} in \cref{subsec:OverallApproach}, is then transformed back to
$\rho^{{\rm{opt}}}_{\varepsilon}(\bm{r}, t) = \frac{1}{R^3} \rho^{\prime} \left(\bm{r}^{\prime}, t^{\prime}\right) = \frac{1}{R^3} \rho^{\prime} \left(\bm{r}/R, t/T\right)$. Likewise, we obtain the optimal control $\bm{v}_{\varepsilon}^{\rm{opt}}(\bm{r}, t) = 2 \varepsilon \nabla_{\bm{r}} \log\varphi(\bm{r},t)=\frac{2 \varepsilon}{R} \nabla_{\bm{r}^{\prime}} \log\phi(\bm{r}^{\prime}, t^{\prime})$. We evaluate $\left(\rho_{\varepsilon}^{\rm{opt}},\bm{v}_{\varepsilon}^{\rm{opt}}\right)$ on a grid of size $100\times 100\times 100\times N_t$ which is denser than the original grid of size $N_x \times N_y \times N_z \times N_t$.

To evaluate the effectiveness of our proposed solution, we perform a closed loop simulation using the computed optimal policy $\bm{v}_{\varepsilon}^{\rm{opt}}(\bm{r}, t)$. We draw $50$ samples from the known $\rho_0$, and use the Euler-Maruyama scheme to propagate these samples forward in time from $t_0 = 0$ s to $t_1 = 3600$ s, using the closed loop It\^{o} SDE \eqref{SDESamplePath}. As the values of $\bm{v}_{\varepsilon}^{\rm{opt}}(\bm{r}, t)$ are known only on the $100\times 100\times 100\times N_t$ grid, we use the nearest neighbor approximation to query $\bm{v}_{\varepsilon}^{\rm{opt}}$ at a out-of-grid sample during SDE integration. The 50 optimally controlled closed-loop state sample paths thus computed, are shown in Fig. \ref{fig:sample_paths}, demonstrating the transfer of the controlled stochastic state from $\rho_0$ to $\rho_1$.
\begin{figure}[t] 
\centering
\includegraphics[width=.7\textwidth]{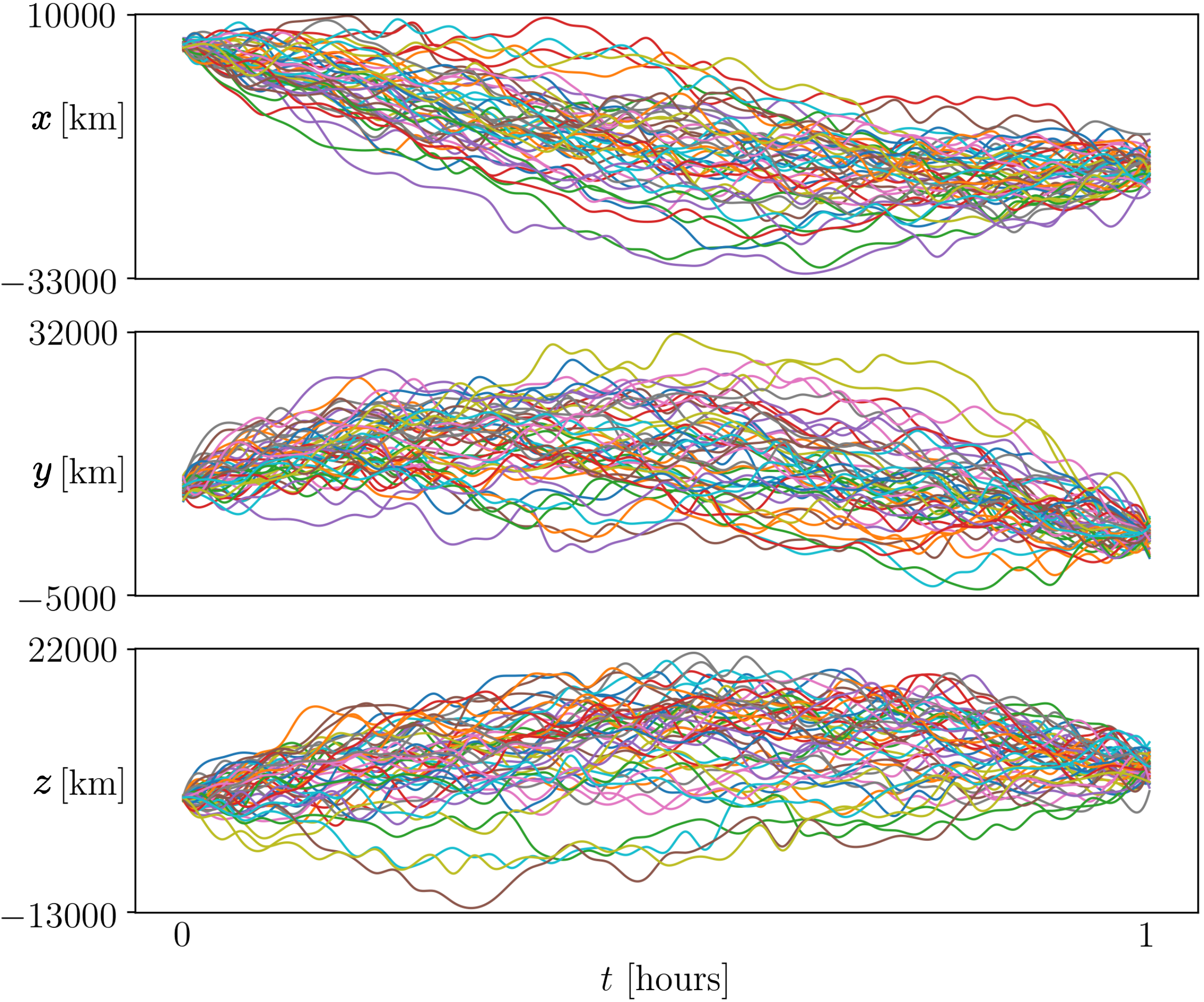}
\caption{Optimally controlled closed loop state sample paths for the numerical simulation in \cref{sec:Numerical}.}
\label{fig:sample_paths}
\end{figure}
\begin{figure}[hbt!]
\centering
\includegraphics[width=.7\textwidth]{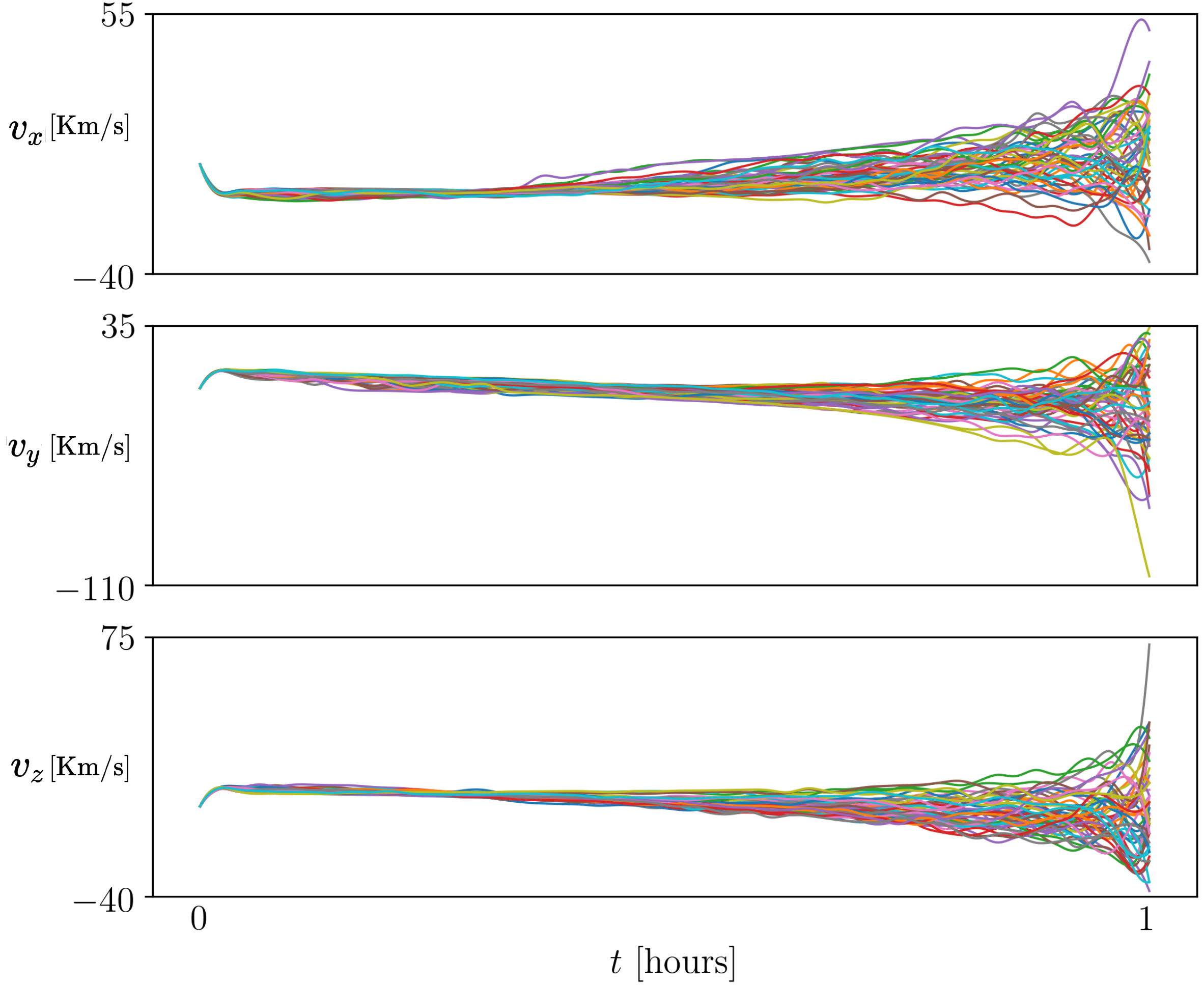}
\caption{Sample paths for the components $v_x,v_y,v_z$ of the optimal control for the numerical simulation in \cref{sec:Numerical}.}
\label{fig:control}
\end{figure}
Fig. \ref{fig:control} shows the corresponding sample paths of the optimal controls. 

\begin{figure}[t] 
\centering
\includegraphics[width=.4\textwidth]{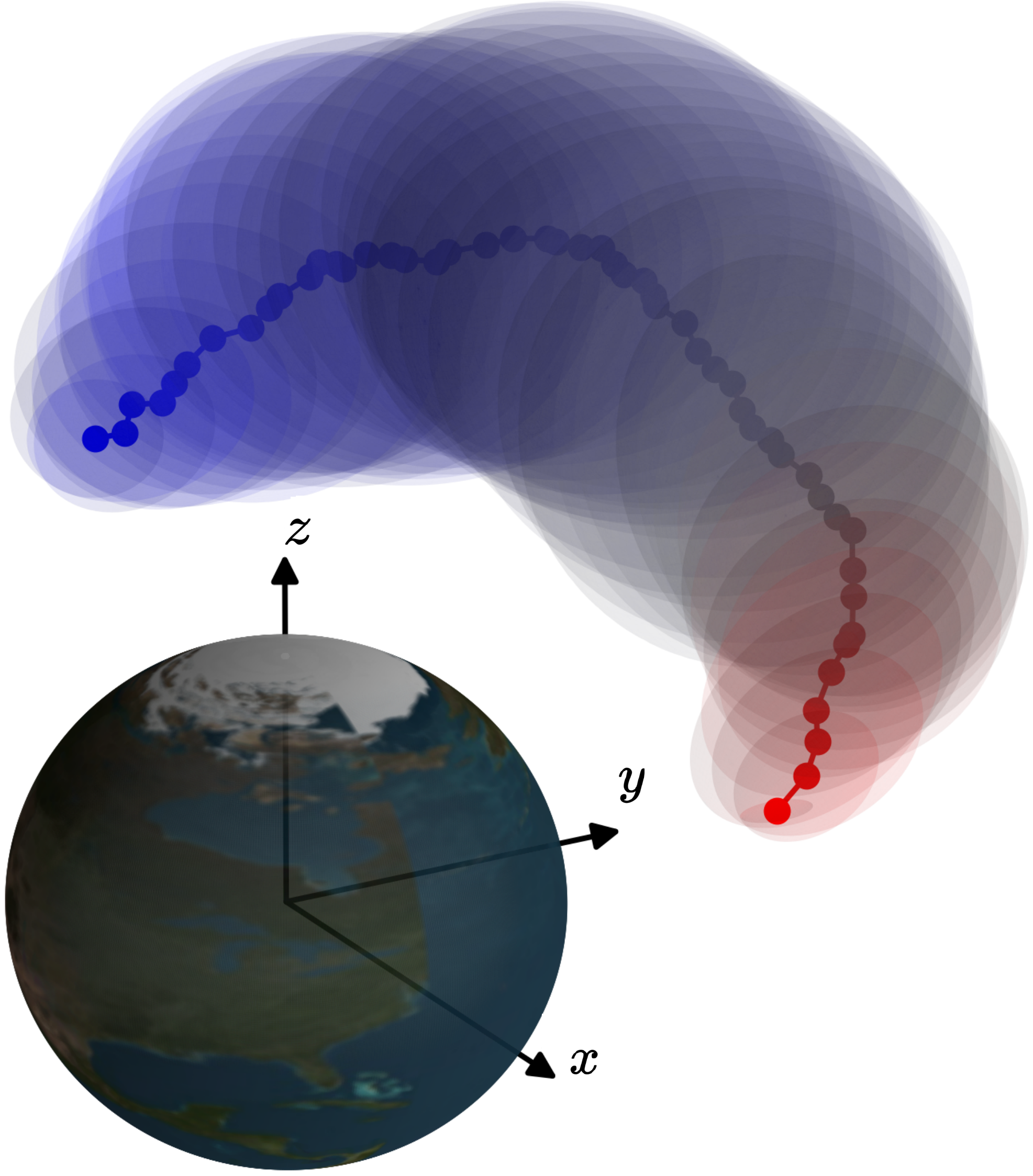}
\caption{The filled circles show the mean position snapshots for the 50 optimally controlled sample paths in $\mathbb{R}^3$, as detailed in \cref{sec:Numerical}. The translucent ellipsoids denote one standard deviation around each mean position.}
\label{fig:3D_plot}
\end{figure}
In Fig. \ref{fig:3D_plot}, the filled circles depict the evolution (\emph{red} initial and \emph{blue} final) of the means of the aforesaid 50 optimally controlled sample path ensemble in $\mathbb{R}^3$. The translucent ellipsoids therein display one standard deviations around the means. We include the Earth centered inertial coordinate in Fig. \ref{fig:3D_plot} for reference, and to emphasize that our regularizer in \eqref{newPotentialWithReg} successfully keeps the optimally controlled sample paths above the Earth's surface. 
%To visualize how our results appear in the $\mathbb{R}^3$ space, we create fig. \ref{fig:3D_plot}, depicting the points in the sample paths at three snapshots in time: $t = 0,~ 0.5$, and $1$ hour. We assign each particle an approximate $\rho$ value by determining the value of $\rho$ at the closest point on the grid to each sample point. Visually, we depict these $\rho$ values using a color gradient, to better identify areas of high and low probability as our samples are propagated through time. For convenience, we mark the known locations of $\bm{\mu}_0$ and $\bm{\mu}_1$. From this figure, we can see that the sample particles at $t = t_f$ are clustered around $\mu_1$, and those particles closest to $\mu_1$ are of highest probability.

Fig. \ref{fig:marginals} plots five snapshots for the univariate $x,y,z$ position marginals computed from the optimally controlled joint PDF $\rho_{\varepsilon}^{\text{opt}}(\bm{r},t)$. This again highlights successful transfer of the stochastic state from given $\rho_0$ to given $\rho_1$ over the given flight time horizon $[t_0,t_1]$. From Fig. \ref{fig:marginals}, we also note that the optimally controlled marginals (and thus the joint PDFs) at the intermediate times are non-Gaussians even though the endpoints $\rho_0,\rho_1$ are Gaussians in this simulation case study. 
\begin{figure}
    \centering
    \begin{subfigure}{0.32\textwidth}
        \includegraphics[width=\textwidth]{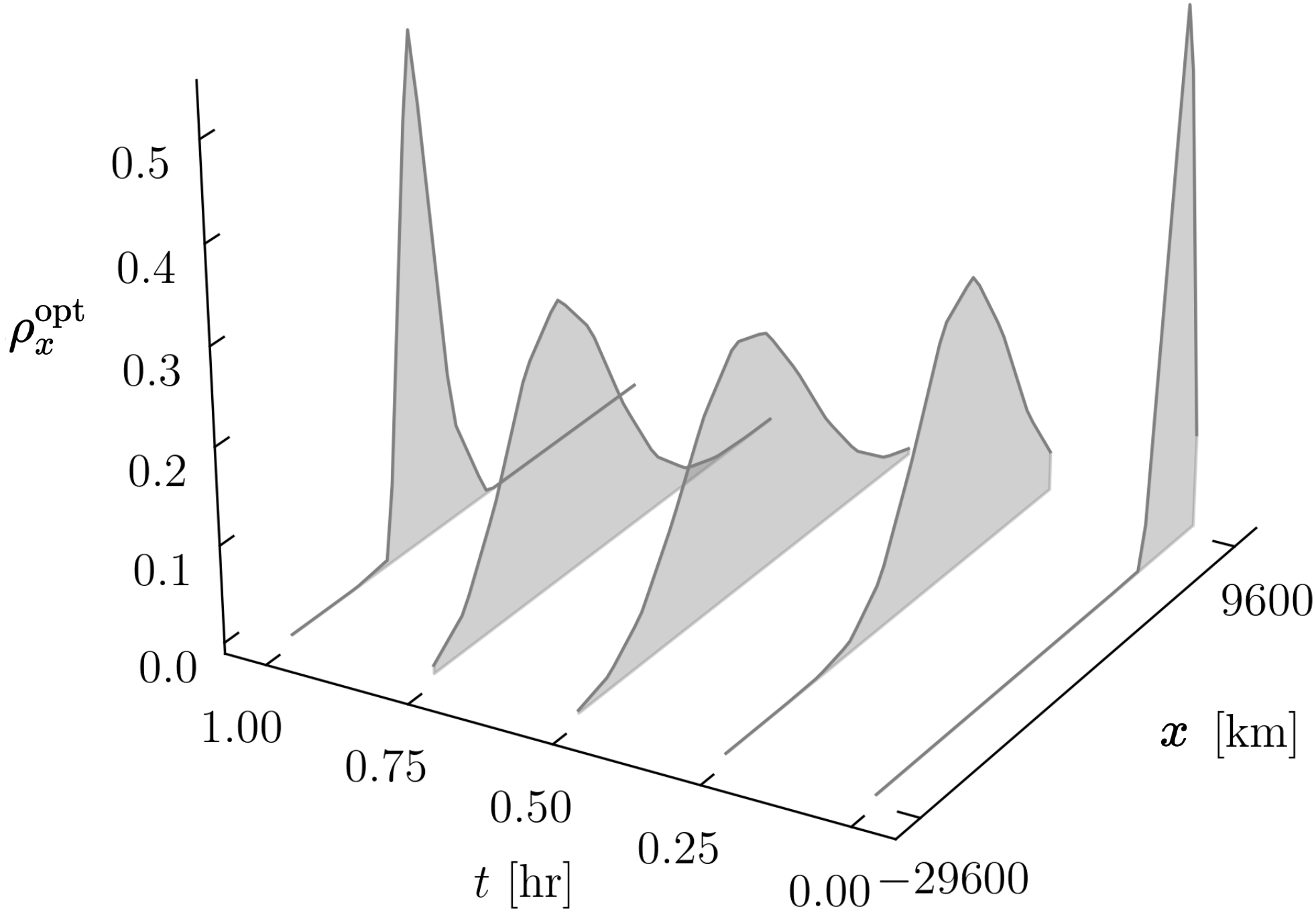}
        % \caption{Marginal PDFs of $x$ position}
    \end{subfigure}
    \begin{subfigure}{0.32\textwidth}
        \includegraphics[width=\textwidth]{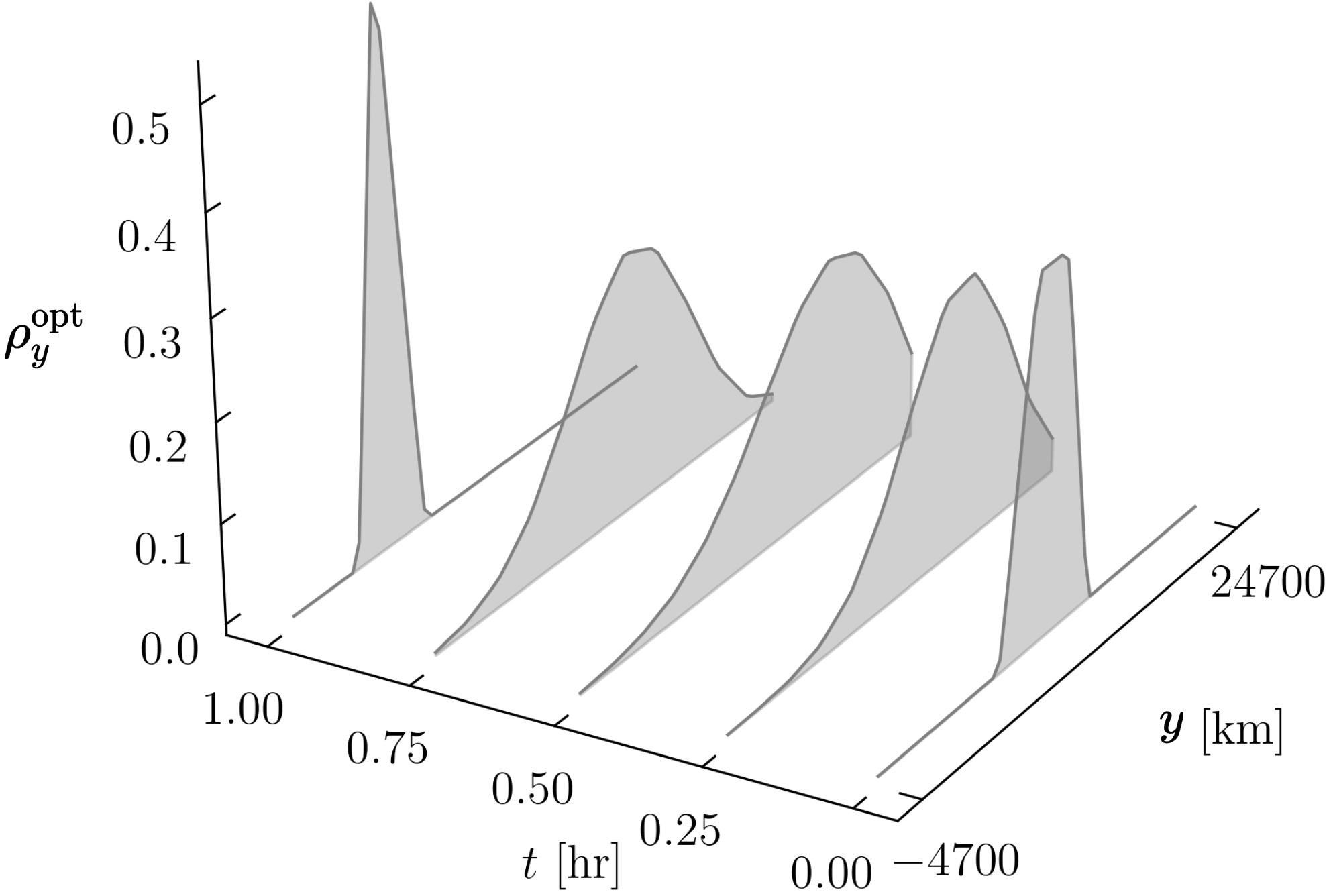}
        % \caption{Marginal PDFs of $y$ position}
    \end{subfigure}
    \begin{subfigure}{0.32\textwidth}
        \includegraphics[width=\textwidth]{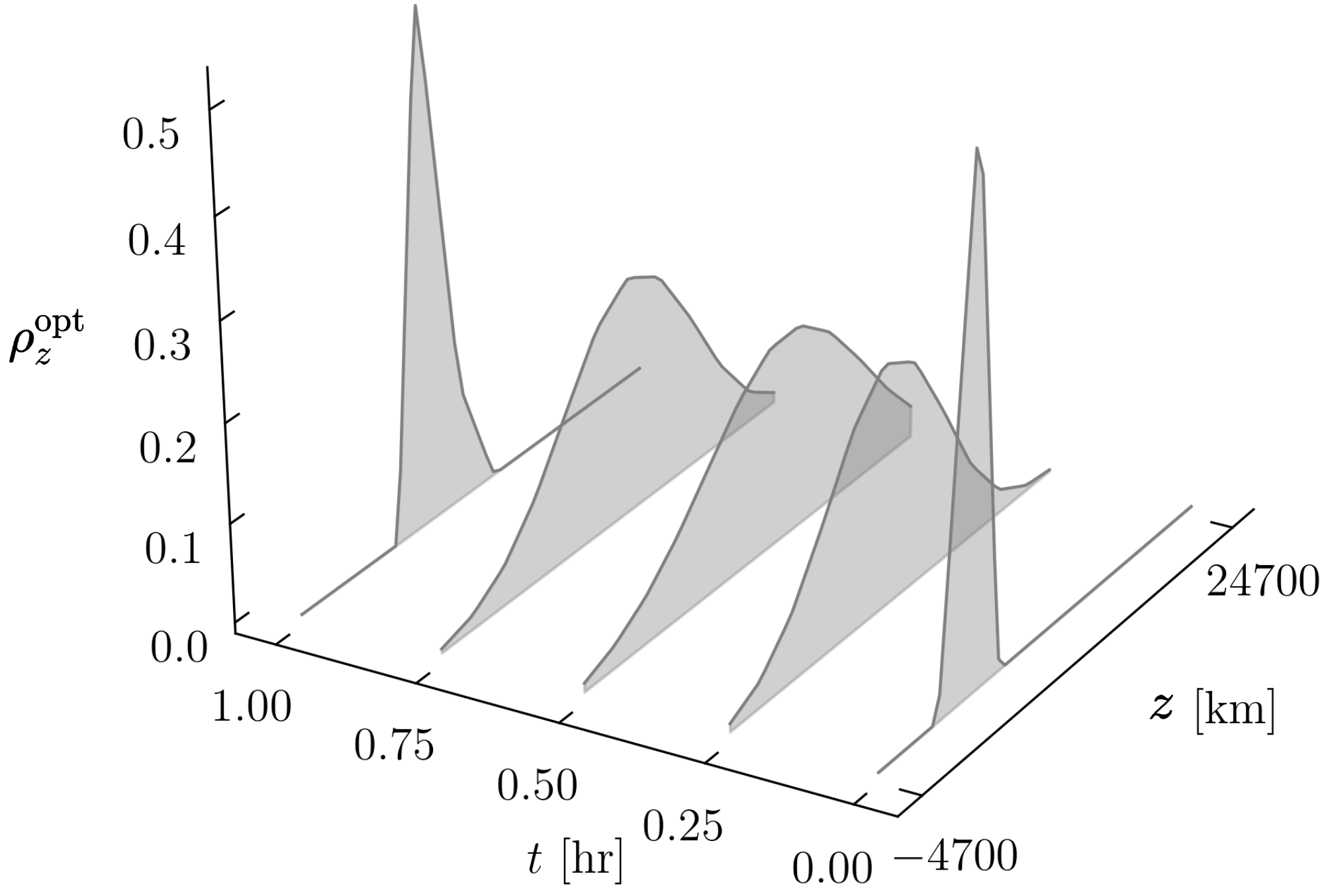}
        % \caption{Marginal PDFs of $z$ position}
    \end{subfigure}
    \caption{Univariate $x,y,z$ marginals for the optimally controlled joint $\rho_{\varepsilon}^{\text{opt}}(\bm{r},t)$ at $t = 0, 0.25, 0.50, 0.75, 1$ hours.}
    \label{fig:marginals}
\end{figure}

%%%%%%%%%%%%%%%%%%%%%%%%%%%%%%%%%%%%

\section{Conclusion}
This work clarifies connections between the probabilistic Lambert problem--a problem in orbital mechanics--and the OMT and SBP, which are topics of current interest in applied mathematics, stochastic control and machine learning. We showed that the probabilistic Lambert problem is, in fact, a generalized OMT where the gravitational potential plays the role of a state cost. Using Figalli's theory, we established the existence-uniqueness of solution for this problem. In the presence of stochastic process noise, this problem is shown to become a generalized SBP for which we derived a large deviation principle, thereby proving the existence-uniqueness for this stochastic variant. We derived the associated conditions for optimality, and reduced the same to a boundary-coupled system of reaction-diffusion PDEs where the gravitational potential plays the role of a state-dependent reaction rate. Building on these newfound connections, we presented a novel algorithm and an illustrative numerical case study to demonstrate the solution of the probabilistic Lambert problem via nonparametric computation. Our methods should be of broad interest for solving guidance and navigation problems with stochastic uncertainties. For the readers' convenience, we conclude with a diagrammatic relation chart between different problems studied in this paper.
\[ \begin{tikzcd}[column sep=large]
\text{Probabilistic Lambert problem}\; \eqref{ProbLambertFeasibility} \;\iff \;\text{L-OMT}\;\eqref{LambertOT}\quad \xLeftarrow{\varepsilon\downarrow 0} \quad \text{L-SBP}\;\eqref{LambertSBP}\\
\qquad\qquad\qquad\qquad\qquad\qquad\qquad\qquad\qquad\qquad\bigg\Downarrow{V\equiv 0} \qquad\qquad\qquad\bigg\Downarrow{V\equiv 0}\\
\qquad\qquad\qquad\qquad\qquad\qquad\qquad\qquad\qquad\quad\text{OMT}\;\eqref{BenamouBrenierOMT}\quad \xLeftarrow{\varepsilon\downarrow 0} \quad \text{SBP}\;\eqref{SBP}
\end{tikzcd}
\]

%%%%%%%%%%%%%%%%%%%%%%%%%%%%%%%%%%%%

%%%%%%%%%%%%%%%%%%%%%%%%%%%%%%%%%%%%

\appendix
\section{Some results on Fourier transform} 
Let $\complexi:=\sqrt{-1}$. We consider the non-unitary angular frequency version of Fourier transform
\begin{align}
\widehat{f}(\bm{\omega}) := \int_{\mathbb{R}^{d}} f(\bm{x})\exp\left(-\complexi\langle\bm{\omega},\bm{x}\rangle\right)\differential\bm{x},
\label{DefFourierTransform}
\end{align}
and the corresponding inverse Fourier transform
\begin{align}
f(\bm{x}) := \dfrac{1}{(2\pi)^{d}}\int_{\mathbb{R}^{d}} \widehat{f}(\bm{\omega})\exp\left(+\complexi\langle\bm{\omega},\bm{x}\rangle\right)\differential\bm{\omega}.
\label{DefInvFourierTransform}
\end{align}
We symbolically write $\widehat{f}(\bm{\omega}) = \mathcal{F}\left[f(\bm{x})\right]$, $f(\bm{x}) = \mathcal{F}^{-1}\left[\widehat{f}(\bm{\omega})\right]$.
Denote the convolution of functions $f$ and $g$, as $(f * g)(\bm{x}) := \int_{\mathbb{R}^{d}}f(\bm{x}-\bm{y})g(\bm{y})\differential\bm{y}=\int_{\mathbb{R}^{d}}f(\bm{y})g(\bm{x}-\bm{y})\differential\bm{y}$. We recall the following basic results for a suitably smooth function $f(\bm{x})$ on $\mathbb{R}^{d}$. For brevity, we omit the proofs for these facts.

\noindent\puteqnum\label{ForierTransformOfLaplacian} (Fourier transform of Laplacian on $\mathbb{R}^{d}$) $\mathcal{F}\left[\Delta_{\bm{x}} f\right] = - \vert\bm{\omega}\vert^{2}\:\widehat{f}(\bm{\omega})$.

\noindent\puteqnum\label{Convolution} (Convolution) $\mathcal{F}\left[\left(f * g\right)(\bm{x})\right] = \widehat{f}(\bm{\omega})\widehat{g}(\bm{\omega})$, $\mathcal{F}\left[f (\bm{x}) g(\bm{x})\right] = \dfrac{1}{(2\pi)^{d}}\left(\widehat{f} * \widehat{g}\right)(\bm{\omega})$.    
      
\noindent\puteqnum \label{InverseFourierOfExpNegSOS} (Transform for exp-negative-square) $\mathcal{F}^{-1}\left[\exp\left(-a\vert\bm{\omega}\vert^2 t\right)\right] = \dfrac{\exp\left(-\frac{\vert\bm{x}\vert^2}{4at}\right)}{\sqrt{\left(4\pi a t\right)^d}}$.

%%%%%%%%%%%%%%%%%%%%%%%%%%%%%%%%%%%%%%

\section{Proof of Lemma \ref{LemmaReactionDiffusionPDESolRn}}\label{AppSubsecProofOfLemmaReactionDiffusionPDESolRn}
We denote the Fourier transform of $u(\bm{x},t)$ w.r.t. $\bm{x}$ as $\widehat{u}(\bm{\omega},t)$ for all $t\in[t_0,\infty)$, and the Laplace transform of $\widehat{u}(\bm{\omega},t)$ w.r.t. $t$ as $\widehat{U}(\bm{\omega},s)$. Let $\mathcal{L}\left[\cdot\right], \mathcal{L}^{-1}\left[\cdot\right]$ denote the Laplace and inverse Laplace transform operators, respectively.

Taking the Fourier transform to both sides of \eqref{PDEIVP}, we obtain $\dfrac{\differential}{\differential t} \mathcal{F}\left[u(\bm{x},t)\right] = a \mathcal{F}\left[\Delta u\right] + \mathcal{F}\left[b(\bm{x})u\right]$, which implies
\begin{align}
\dfrac{\differential}{\differential t}\widehat{u}(\bm{\omega},t) = -a\vert\bm{\omega}\vert^2 \: \widehat{u}(\bm{\omega},t) + \dfrac{1}{(2\pi)^{d}}\left(\widehat{b}(\bm{\omega}) * \widehat{u}(\bm{\omega},t)\right),
\label{FTofPDE}
\end{align}
where we used \eqref{ForierTransformOfLaplacian} and \eqref{Convolution}. Taking the Laplace transform to both sides of \eqref{FTofPDE}, we get
\begin{align}
& s\widehat{U}(\bm{\omega},s) - \widehat{u}_0(\bm{\omega}) = -a\vert\bm{\omega}\vert^2 \:\widehat{U}(\bm{\omega},s) + \dfrac{1}{(2\pi)^{d}}\displaystyle\int_{\mathbb{R}^{d}}\widehat{b}(\bm{\omega}-\bm{\eta})\widehat{U}(\bm{\eta},s)\:\differential\bm{\eta}\nonumber\\
\Rightarrow&\quad \widehat{U}(\bm{\omega},s) = \dfrac{\widehat{u}_0(\bm{\omega})}{s + a\vert\bm{\omega}\vert^2} + \dfrac{1}{(2\pi)^{d}}\displaystyle\int_{\mathbb{R}^{d}}\widehat{b}(\bm{\omega}-\bm{\eta})\dfrac{\widehat{U}(\bm{\eta},s)}{s + a\vert\bm{\omega}\vert^2}\:\differential\bm{\eta}.
\label{LaplaceOfFourier}
\end{align}
Inverse Laplace transform of \eqref{LaplaceOfFourier} yields
\begin{align}
&\widehat{u}(\bm{\omega},t) = \widehat{u}_0(\bm{\omega})\exp\left(-a\vert\bm{\omega}\vert^2 t\right) + \dfrac{1}{(2\pi)^{d}}\displaystyle\int_{\mathbb{R}^{d}}\widehat{b}(\bm{\omega}-\bm{\eta})\:\mathcal{L}^{-1}\left[\dfrac{\widehat{U}(\bm{\eta},s)}{s + a\vert\bm{\omega}\vert^2}\right]\:\differential\bm{\eta} \nonumber\\
&= \widehat{u}_0(\bm{\omega})\exp\left(-a\vert\bm{\omega}\vert^2 t\right) + \dfrac{1}{(2\pi)^{d}}\displaystyle\int_{\mathbb{R}^{d}}\widehat{b}(\bm{\omega}-\bm{\eta})\left(\int_{0}^{t}\widehat{u}(\bm{\eta},\tau)\exp\left(-a\vert\bm{\omega}\vert^2 (t-\tau)\right)\differential\tau\!\right)\!\differential\bm{\eta}\nonumber,
\end{align}
where the last line used the convolution property for the Laplace transform, namely $\mathcal{L}^{-1}\left[F(s)G(s)\right] = \left(f * g\right)(t)$.

We next change the order of integration (using Fubini-Tonelli Theorem) for the space-time integral above, then apply the inverse Fourier transform to both sides, and use \eqref{Convolution} to arrive at
\begin{align}
&u(\bm{x},t) = \!\left(u_0 * \mathcal{F}^{-1}\!\left[\exp\left(-a\vert\bm{\omega}\vert^2 t\right)\right]\right)\!(\bm{x})\label{FubiniThenInverseFT}\\
&+ \dfrac{1}{(2\pi)^{d}}\displaystyle\int_{t_0}^{t}\!\!\left(\!\mathcal{F}^{-1}\!\left[\exp\left(-a\vert\bm{\omega}\vert^2 (t-\tau)\!\right)\right] * \mathcal{F}^{-1}\!\left[\! \left(\int_{\mathbb{R}^{d}}\widehat{b}(\bm{\omega}-\bm{\eta})\widehat{u}(\bm{\eta},\tau)\differential\bm{\eta}\right)\!\right]\right)(\bm{x})\:\differential\tau.
\nonumber
\end{align}
Using \eqref{Convolution} again, we notice that
$\mathcal{F}^{-1}\left[ \left(\int_{\mathbb{R}^{d}}\widehat{b}(\bm{\omega}-\bm{\eta})\widehat{u}(\bm{\eta},\tau)\differential\bm{\eta}\right)\right] = \left(2\pi\right)^{d}b(\bm{x})u(\bm{x},\tau)$, which together with \eqref{InverseFourierOfExpNegSOS}, simplifies \eqref{FubiniThenInverseFT} to \eqref{IntegralRepresententationFormula}. \hfill\qed

%%%%%%%%%%%%%%%%%%%%%%%%%%%%%%%%%%%%%%

\bibliographystyle{siamplain}
\bibliography{references}
\end{document}